\newcommand{\elld}{\ell_{v,d}}
\newcommand{\ve}{\varepsilon}
\newtheorem{prop}{Proposition}
\newtheorem{lemma}{Lemma}
\newtheorem{thm}{Theorem}
\theoremstyle{definition}
\newtheorem{definition}{Definition}
\newtheorem{rem}{Remark}
\author{Stefano Baranzini}
\email{stefano.baranzini@unito.it}
\thanks{The author is partially supported by the 2025 INDAM-GNAMPA project "Non integrabilità e complessità in Meccanica Celeste" and by the 2022 PRIN project "Stability in Hamiltonian Mechanics and Beyond"}
\title{Non-integrability of n-centre billiards}
\address{University of Turin, Department of Mathematics, via Carlo Alberto 10, Turin, Italy}
\date{}
\keywords{Mechanical billiards, $n$–centre problem,		Symbolic dynamics, Variational methods}
\subjclass[2020]{
	37B10, 
	70G75 
	37C83, 
}
\begin{document}
	\begin{abstract}
		We investigate a class of mechanical billiards, where a particle moves in a planar region under the influence of an $n$–centre potential and reflects elastically on a straight wall. Motivated by Boltzmann’s original billiard model we explore when such systems fail to be integrable. We show that for any positive energy, if there are at least two centres and the wall is placed sufficiently far away, the billiard dynamics is necessarily non-integrable and exhibits chaotic behaviour. In the classical Newtonian two-centre problem, we identify a simple geometric condition on the wall that likewise guarantees chaos, even though the free two-centre system is integrable. 
	\end{abstract}
	\maketitle
	
	\section{Introduction}
	Classical billiards describe the motion of a free particle confined in a domain $\Omega\subset \mathbb{R}^2$. When the particle hits the boundary, it gets reflected elastically, preserving the tangential component of its velocity and switching sign to the normal one. Billiard dynamics is a thriving area or research and ``a playground for mathematicians''\footnote{See \cite{katok_playground} for a pleasant survey on the topic.} where many qualitatively different behaviours can be observed. Despite its simple definition, it displays rich and diverse features depending on the shape of $\Omega$, many of which are not yet completely understood. 	
	For instance, it is still not known whether there are other integrable billiard tables other than circles and ellipses. Only local and partial solutions to this conjecture are known; see for instance \cite{birkhoff_conj_ADK,birkhoff_conj_SK,birkhoff_conj_BM,birkhoff_conj_G}.
	
	In the literature there are various generalizations of classical billiards. For instance, one can look at a billiard as a map on the space of oriented secant lines of $\Omega$. The image of a line is its orthogonal reflection about the tangent at the intersection point with $\partial \Omega$. One can change this rule by replacing the normal with another transverse vector field along the boundary. These are the \emph{projective billiards} introduced in \cite{TABACHNIKOV_1997}. Taking a dual perspective, one can define a map on the space of tangent lines to $\Omega$. This is known as \emph{outer billiard} and was introduced in \cite{moser_stable_random}.  One can leave the realm of Euclidean geometry and replace $\Omega$ with an open set of a Riemannian manifold, using the geodesic flow instead of straight lines to move from boundary point to boundary point. The list is much longer and constantly growing.
	
	In this paper, we consider a closely related variant of the latter system: planar \emph{mechanical billiards}. We still consider motion in a planar domain, but under the influence of a potential field. Upon impact with the boundary, the elastic collision condition holds. 	
	Many examples of integrable mechanical billiard systems have been found. For instance, any central force with a (centred) disk gives rise to an integrable mechanical billiard. More curiously, the harmonic oscillator potential $V(r)= kr^2$  and certain combinations of centred conics give rise to integrable mechanical billiards, see \cite{Pustovoitov_ellipses,Pustovoitov_conics}. Also the Kepler potential $V(r) = \frac{m}{r}$ combined with confocal conics is an integrable system as shown in \cite{kozlov1995,Kobtsev_elliptic}. Most notably, confocal ellipses are the only centrally symmetric, strictly convex, analytic domains containing the origin for which the Kepler billiard is integrable (see \cite{birkhoff_kepler, Barutello_2023}).   	
	 We refer to \cite{zhao_takeuchi_mechanical_billiards} for many other examples and an interesting discussion on how these systems are related through conformal changes of coordinates.

	In this paper, we consider the following mechanical billiards. Let us fix some points $c_1,\dots,c_n \in \mathbb{R}^2$, not necessarily distinct, which we call the \emph{centres}.	
    Let $\ell \subseteq \mathbb{R}^2\setminus \{c_1,\dots, c_n\}$ be a line and assume that $c_1,\dots c_n$ lie in a common half plane $\Pi_\ell$ bound by $\ell$. We consider a particle moving in $\Pi_\ell$, subject to the force exerted by the potential 
	\begin{equation}
		\label{eq:potential}
	V(x) = \sum_{i=1}^{n}\frac{m_i}{\vert x-c_i\vert^{\alpha_i}},
	\end{equation}
	where $\alpha_i\ge1$ and $m_i>0$. Upon impact with $\ell$, the velocity is reflected according to the classical law of elastic reflection. We will regard this system in two ways: as a three dimensional \emph{continuous time} one, following the trajectory of the particle in $\Pi_\ell$ or as a \emph{discrete time} one, iterating a symplectic map, the \emph{billiard map}, which keeps track only of consecutive bouncing points on the wall $\ell$ and their momenta.  Through out the paper we will restrict ourselves to positive energy levels, $h > 0$. 
	
	This work was initially motivated by the following system, which we call \emph{Boltzmann billiard}, considered by L. Boltzmann in \cite{boltzmann1868losung} as a candidate to illustrate his ergodic hypothesis. We have a particle, moving in a half-plane under the force field generated by 
	\[
	V_B(x) = \frac{1}{\vert x \vert}+\frac{\beta}{\vert x\vert^2}
	\]
	for some $\beta>0$. Upon impact with the boundary of the configuration space, the particle undergoes an elastic reflection. Boltzmann claimed that the corresponding billiard is ergodic. 	
	This claim has been disproved in \cite{felder2021poncelet} for small values of $\beta$ and negative energy $h<0$. The case $\beta=0$, also called \emph{Kepler billiard}, is known to be integrable for all values of the energy  (see \cite{gallavotti,zhao2022projective} and \cite{zhao_takeuchi_mechanical_billiards}) in the sense that there exists a non trivial function $I$ which is constant along orbits. In \cite{felder2021poncelet} it is shown that generic level sets of $I$ are foliated by quasi-periodic trajectories and so, as an  application of Moser's twist Theorem, the Boltzmann billiard is quasi-integrable as long as $\beta$ is sufficiently small. In this regime, the system cannot be ergodic.
	
 	We are thus prompt to consider other singular perturbations of Kepler potential and wonder: \emph{How special is Kepler billiard?}  \emph{Are there other $n-$centre type systems which are integrable?} It is classical that the Newtonian\footnote{i.e. with $\alpha_i=1$ for all $i$.}  $2-$centre problem is an integrable system. \emph{Is the corresponding billiard integrable?} This appears to be the case for certain combinations of confocal conics and for a straight line through the midpoint of the segment joining the centres, orthogonal to it, as shown in \cite[Section 5]{zhao_takeuchi_mechanical_billiards} and \cite{Takeuchi_2024}.
	
	On the other hand, for $n \ge 3$, the $n-$centre problem is non-integrable and has positive topological entropy, as shown in \cite{Bolotin_2017, BolKoz2017,baranzini2024chaotic}. In \cite{BolKoz2017, baranzini2024chaotic}, the authors proved  the existence of an invariant \emph{compact} chaotic subset. However, being compact, this set does not directly interact with the line $\ell$ whenever the latter sits sufficiently far from the centres (or the energy is high). Thus, it is natural to wonder, is the \emph{billiard map} integrable in this case? 
	
	The problem, at least in the high energy  setting, seems closely related to the \emph{irregular scattering} investigated in \cite{KleKna1992,Kna1987}. Stable and unstable manifolds of periodic trajectories extend to spatial infinity and the line $\ell$ acts as a focusing lens, enabling the particle to switch between trajectories shadowing the stable/unstable manifold of different periodic trajectories. 
	
	Therefore, it is natural to expect that the billiard map is not integrable whenever $n\ge 3$. Surprisingly though, we have non-integrability also in the case $n=2$ and, under some further mild additional assumptions on the potential\footnote{which are satisfied, for instance, by \emph{Boltzmann} billiards}, in the case with just one centre.
	
	\subsection{Main results}
	
	To state the two main Theorems of this work we need to introduce some notation. We consider a particle $x$ moving in $\mathbb{R}^2\setminus \{c_1,\dots,c_n\}$ and satisfying the following non-linear equation
	\begin{equation}
			\label{eq:motion_equation}
			\ddot{x}(t) =-\sum_{i=1 }^n \alpha_im_i\frac{x(t)-c_i}{\vert x(t)-c_i \vert^{\alpha_i+2}},
	\end{equation}
for $\alpha_i\ge 1$, $m_i>0$. Note that the $c_i$ are not necessarily distinct. Next, we introduce the reflection wall. 
\begin{definition}
	\label{def:mirror}
	Let $v,w\in \mathbb{R}^2$, $\vert v \vert = \vert w \vert =1$, $\langle v,w\rangle=0$. Let us assume that $\{v,w\}$ is positively oriented and let $d>0$. We consider the line
	\begin{equation}
		\label{eq:line}
		\ell_{v,d} =\langle v \rangle - d w .
	\end{equation}
    We say that $\elld$ is a reflection wall if $d$ is big enough so that $\elld$ 
	separates the plane in two half-planes, one of which contains all the $c_i$. We denote this half-plane by $\Pi_{\elld}$, i.e.
	\[
	\Pi_{\elld}  = \{x \in \mathbb{R}^2: \langle w,x\rangle \ge - d\}\supset \{c_1,\dots,c_n\}.
	\]
\end{definition}

The Hamiltonian function associated with \eqref{eq:motion_equation} is
\[
	H(u,x) = \frac{1}{2}\vert u \vert^2- V(x) = \frac{1}{2}\vert u \vert^2- \sum_{i=1}^{n}\frac{m_i}{\vert x-c_i\vert^{\alpha_i}}.
\]
For $h>0$, let us consider the set
\[
\Sigma_{h,\elld} = \{(u,x)\in \mathbb{R}^2: x \in \elld, \langle u, w \rangle > 0, H(u,x) =h\}.
\] 
Let $\Phi$ be the (partially defined) billiard map $\Phi: \Sigma_{h,\elld} \to \Sigma_{h,\elld}$ sending a point $(x,u)$ to the point of first return to $\Sigma_{h,\elld}$, with velocity reflected about $v$.
This leads to the following result.	
\begin{thm}
	\label{thm:main_perturbative}
Let $h>0$ and assume that $\#\{c_1,\dots,c_n\}\ge 2$. For every $v$ there exists $d_0(v,h)\footnote{If $\alpha_i \in [1,2)$ for all $i$, $d_0$ can be actually taken to be independent of the energy. Compare with Remark \ref{remark:ball_half_plane}. }>0$ such that, for every $d>d_0(v,h)$, the corresponding billiard map $\Phi:\Sigma_{h,\elld}\to \Sigma_{h,\elld}$ is not analytically integrable. Moreover, there exists an invariant compact subset of $\Sigma_{h,\elld}$ on which $\Phi$ is semi-conjugated to a Bernoulli shift.
\end{thm}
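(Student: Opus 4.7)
The plan combines an \emph{unfolding} of the billiard into a $2n$-centre problem in the whole plane with a symbolic-dynamics construction of chaotic trajectories that bounce alternately between two widely-separated clusters of centres.

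\textbf{Step 1 (Unfolding through the wall).} Let $R:\mathbb{R}^2\to\mathbb{R}^2$ denote the Euclidean reflection across $\elld$, lifted to phase space when needed. Consider the $2n$-centre Hamiltonian on $\mathbb{R}^2$ whose centres are $\{c_1,\dots,c_n\}\cup\{Rc_1,\dots,Rc_n\}$, with the same weights $m_i$ and exponents $\alpha_i$ on each mirror pair. Its flow $F_t$ is $R$-equivariant. A billiard trajectory in $\Pi_\elld$ lifts canonically to an $F_t$-orbit by replacing each post-reflection arc with its $R$-image; conversely, folding an $F_t$-orbit that crosses $\elld$ transversally produces a billiard trajectory. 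If $\Psi$ denotes the first-return map of $F_t$ to $\elld$ and $R_*$ the momentum reflection across $v$, then $\Phi=R_*\circ\Psi$ on $\Sigma_{h,\elld}$.

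\textbf{Step 2 (Symbolic dynamics on the unfolded problem).} Set $m=\#\{c_1,\dots,c_n\}\ge 2$, and let $A\subset\Pi_\elld$ collect the distinct original centres and $B=R(A)$ their mirror copies; the two clusters sit at distance $\sim 2d$. I would adapt the multi-centre symbolic-dynamics constructions of \cite{BolKoz2017,baranzini2024chaotic} to this $2m$-centre configuration to build an invariant set $\widetilde K\subset\{H=h\}$ of $F_t$ together with a semi-conjugation onto the Bernoulli shift $\sigma:\{1,\dots,m\}^{\mathbb Z}\to\{1,\dots,m\}^{\mathbb Z}$. The essential refinement is to prescribe only itineraries that \emph{alternate} between $A$-visits and $B$-visits, the $k$-th symbol recording \emph{which} centre in $A$ (equivalently, its mirror in $B$) is visited at the $k$-th step. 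For $d\ge d_0(v,h)$, the arcs connecting an $A$-visit to the next $B$-visit are nearly ballistic segments crossing $\elld$ exactly once, and they can be glued uniformly to the near-collision arcs at the endpoints. The threshold $d_0(v,h)$ is chosen to ensure that these long transits sit in the domain of applicability of the hyperbolic estimates near each cluster; as hinted by the footnote, sub-quadratic singularities allow one to take $d_0$ independently of $h$ thanks to the scale invariance of the Newtonian/Kepler-type scattering.

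\textbf{Step 3 (Descent and non-integrability).} Folding $\widetilde K$ back to $\Pi_\elld$ and intersecting with the Poincaré section $\Sigma_{h,\elld}$ yields a compact $\Phi$-invariant set $K$ that inherits the semi-conjugation with $\sigma$: the $k$-th symbol identifies the centre of $A$ visited between the $k$-th and $(k+1)$-th wall bounce. This gives the second assertion of the theorem. For non-integrability, a non-constant real-analytic first integral of $\Phi$ would be constant on the closure of each $\Phi$-orbit inside $K$ and thus partition $K$ into analytic level sets; but $K$ semi-conjugates onto a topologically transitive (and mixing) shift, so no such non-trivial partition can exist, contradicting analytic integrability.

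\textbf{Main obstacle.} The delicate point is the construction in Step 2. Standard multi-centre symbolic dynamics glues together near-collision arcs of bounded length, whereas here one must interleave long nearly-straight transits of length $\sim 2d$ between the two clusters and verify that shadowing (or variational minimisation in the Maupertuis setting) still produces orbits realising any prescribed alternating itinerary, with the map from symbol sequences to orbits continuous and injective up to the obvious ambiguities. The explicit lower bound $d_0(v,h)$ ultimately comes from requiring the transit geometry to be straight enough that the local hyperbolic/variational toolbox at each cluster applies uniformly in the symbol sequence; this is where positivity of the energy and the assumption $\alpha_i\ge 1$ enter the scattering estimates.
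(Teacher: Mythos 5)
Your Step 1 contains a genuine error that undermines the whole reduction. The unfolding trick you invoke is only valid when the potential is symmetric with respect to the wall. An arc of a billiard trajectory inside $\Pi_{\elld}$ satisfies $\ddot x=-\nabla V(x)$ with $V$ generated by $c_1,\dots,c_n$ alone; for its lift to be an orbit of the $2n$-centre flow $F_t$ it would have to satisfy $\ddot x=-\nabla W(x)$ with $W=V+V\circ R$, and the mirror centres $Rc_i$ exert a nonzero force on $\Pi_{\elld}$. Conversely, folding an $F_t$-orbit produces a billiard trajectory for the potential $W\vert_{\Pi_{\elld}}$, not for $V$. So the identity $\Phi=R_*\circ\Psi$ is false, and the known symbolic dynamics for the smooth multi-centre problem cannot be imported. (The honest version of your idea would treat the mirror forces, of size $O(d^{-\alpha_i-1})$, as a perturbation and redo all hyperbolicity/shadowing estimates uniformly in the symbol sequence; you do not do this.) Moreover, even granting the unfolding, the existing positive-energy constructions in \cite{BolKoz2017,baranzini2024chaotic} produce compact invariant sets localized near a fixed cluster of centres; they do not supply orbits executing transits of length $\sim 2d$ between two clusters whose separation diverges, nor the claim that each transit crosses $\elld$ exactly once. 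You correctly flag this as the ``main obstacle,'' but it is precisely the content of the theorem, not a technicality: in the paper the confinement of orbits to $\Pi_{\elld}$ (equivalently, that arcs meet the wall only at their endpoints) is the crux, established via the Lagrange--Jacobi inequality and Gauss--Bonnet arguments for the Jacobi--Maupertuis metric (Lemmas \ref{lemma:convexity_mirror}, \ref{lemma:minimizers_in_a_ball}, \ref{lemma:compact_set}), and it is what determines $d_0(v,h)$.

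For comparison, the paper does not unfold at all. It works directly in $\Pi_{\elld}$ with the Jacobi--Maupertuis metric: for two integers $k\ne m$ it minimizes length in spaces $\mathcal{C}_{(i_1,\dots,i_r)}$ of concatenations of arcs with endpoints on $\elld$, each arc constrained to wind $i_j$ times around the pair $c_1,c_2$; uniqueness and collision-avoidance of the arcs follow from negative curvature and Gauss--Bonnet, confinement to $\Pi_{\elld}$ from the Lagrange--Jacobi inequality (which is also the source of $d_0$ and of the footnote about $\alpha_i\in[1,2)$), and aperiodic orbits from a $C^1$-compactness/diagonal limit of periodic minimizers. The symbols of the Bernoulli shift are thus winding numbers of arcs between consecutive bounces, not labels of visited centres. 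Your Step 3 descent and the transitivity argument for non-integrability are fine in spirit, but as written the proposal does not constitute a proof.
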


\begin{rem}
		Theorem \ref{thm:main_perturbative} says that, for any positive energy $h>0$, the billiard map is non-integrable provided that the line $\elld$ lies sufficiently far from the centres. The proof relies on a convexity property of solutions of \eqref{eq:motion_equation} known as \emph{Lagrange-Jacobi} inequality, compare with  \eqref{eq:lagrange_jacobi}. Of course, one could also phrase the statement fixing a line $\elld$ and increasing the energy of the system. Indeed, for any line $\elld$ there is an energy $h_0(v,d)$ such that for all $h\ge h_0(v,d)$ the Lagrange-Jacobi inequality holds and the billiard map is non-integrable.
	\end{rem}

\begin{rem}
	A closer look at the proof of Theorem \ref{thm:main_perturbative} shows that one can adapt the argument to the case in which the $c_i$ are all equal provided that at least one of the $\alpha_i>\frac{4}{3}$. Indeed, this condition (see \cite[Lemma 5.2 and Proposition 5.1]{BolKoz2017}) ensures that we can find enough collision-less non-homotopic length minimizers connecting points of $\elld$. The construction of an invariant set as in the statement of Theorem \ref{thm:main_perturbative} given in Section \ref{section:perturbative_version} carries over without any substantial change. Moreover, when the exponents $\alpha_i \in [1,2]$, inequality \eqref{eq:lagrange_jacobi} holds for all positive energies.
\end{rem}
   Our second main result focuses on the Newtonian $2-$centre problem. Let $e_1 = (1,0) \in \mathbb
   {R}^2$. Up to translation, rotation and scaling, it is not restrictive to consider the potential
   \[
    V(x) = \frac{m_1}{\vert x-e_1\vert}+ \frac{m_2}{\vert x+e_1\vert},\quad m_i>0.
   \]
    For any choice of line $\elld$ not crossing the segment $[-e_1,e_1]$, there exists two unique solutions of the $2-$centre problem, which we call $s_{\pm \infty}$, satisfying the following properties
    \begin{itemize}
    	\item $s_{\pm \infty} : [0,+\infty)\to \mathbb{R}\setminus[-e_1,e_1]$,
    	\item $s_{\pm \infty}$ are asymptotic to $[-e_1,e_1]$ and winding in opposite directions,
    	\item $s_{\pm\infty}(0)\in \elld$ and $\langle \dot s_{\pm\infty}(0),v\rangle=0$.
    \end{itemize} 
     The curves $s_{\pm\infty}$ belong to the stable/unstable manifold of the collision solution supported in $[-e_1,e_1]$. A precise description of these solutions is given in Sections \ref{subsec:curve_infinite_turns}.
     We now formulate an admissibility notion for a line $\elld$, in term of $s_{\pm \infty}$.
     \begin{definition}
     	\label{def:admissibility}
     	A line $\elld$ is an admissible reflection wall if the image of  $(0,\infty)$ through the $s_{\pm \infty}$ defined above is contained in the interior of $\Pi_{\elld}$. 
     \end{definition}
     \begin{figure}
     	\label{fig:admissible_lines}
     	\centering
        \caption{A solution $s_{\infty}$ and some admissible line. The mass parameter are $m_1=\frac32$,$m_2=\frac12$ and the energy is $h = \frac12$.}
     	\includegraphics[width=0.8\textwidth]{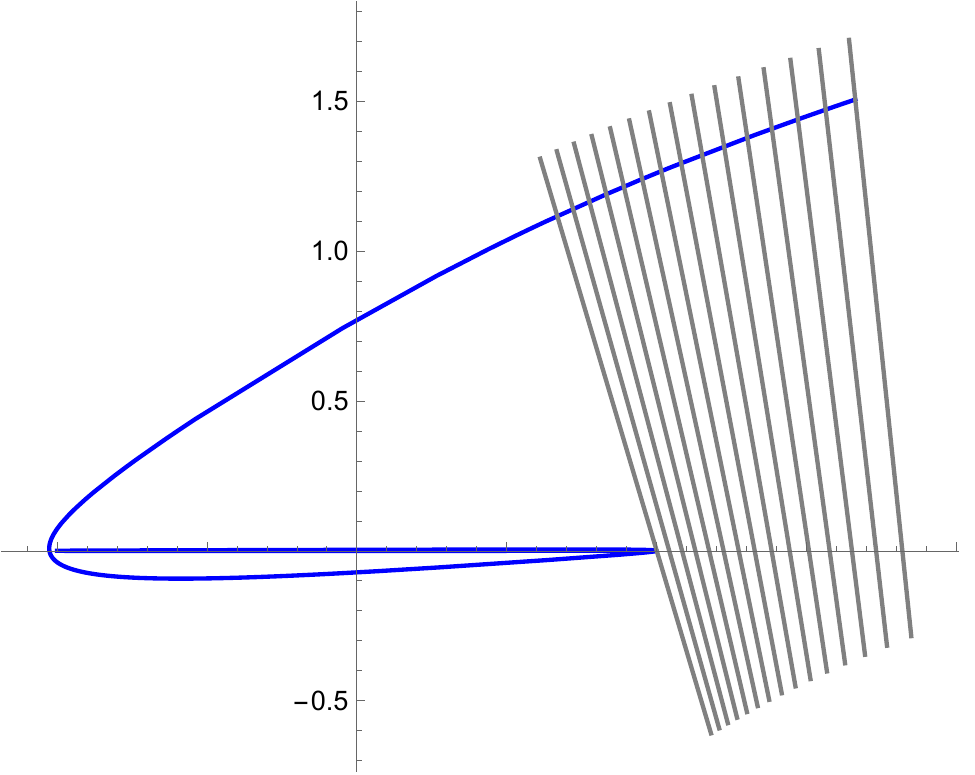}     
     \end{figure}
     Let us further remark that the solutions $s_{\pm \infty}$ are essentially explicit, compare with Appendix \ref{appendix:2_centres} and equations \eqref{eq:parametrization_xi}-\eqref{eq:parametrization_eta}.
    We have the following result.
    \begin{thm}
    	\label{thm:main_2_centres}
    	Let $\elld$ be an admissible reflection wall. The corresponding billiard map $\Phi:\Sigma_{h,\elld}\to \Sigma_{h,\elld}$ is not analytically integrable. Moreover, there exists a compact invariant subset of $\Sigma_{h,\elld}$ on which $\Phi$ is semi-conjugated to a sub-shift of finite type.
    \end{thm}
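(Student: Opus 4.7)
The plan is to replicate the horseshoe construction of Theorem \ref{thm:main_perturbative} with the role of ``trajectories escaping to infinity'' now played by the two separatrices $s_{\pm\infty}$ of the collision orbit supported in $[-e_1,e_1]$. The admissibility of $\elld$ is exactly what is needed to carry out this substitution: it guarantees that trajectories $C^0$-close to $s_{\pm\infty}$ remain inside $\text{int}(\Pi_{\elld})$ except at their endpoints, so the wall plays only the role of an isolated reflecting boundary and does not obstruct the winding process. Moreover, since each $s_{\pm\infty}$ meets $\elld$ perpendicularly at $t=0$, it is itself a genuine (unbounded) billiard orbit of $\Phi$, bouncing back onto its time-reverse; this allows arcs shadowing either separatrix to be freely concatenated, provided the geometric compatibility at each bounce is respected.

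Concretely, my plan has four steps. First, I would set up a finite alphabet $\mathcal{A}$ whose symbols encode, for one arc of billiard trajectory between two consecutive reflections on $\elld$, the choice of which separatrix is shadowed near each endpoint together with a homotopy class in $\Pi_{\elld}\setminus\{-e_1,e_1\}$ of bounded complexity (say, one half-turn around one of the two centres). Compatibility between the outgoing velocity after a bounce and the ingoing one of the next arc selects the allowed transitions and defines a subshift of finite type $\Sigma^{\mathrm{SFT}}_{\mathcal{A}}$. Second, for any finite admissible word $(a_1,\dots,a_n)$ I would minimize the Maupertuis functional at energy $h$ over $H^1$ paths with $n+1$ free endpoints on $\elld$, each intermediate arc constrained to the homotopy class prescribed by $a_i$; the free-endpoint condition forces the elastic reflection law at each bounce via the first variation, while a Gordon/Ljusternik--Schnirelmann argument combined with the explicit elliptic-coordinate formulas from Appendix \ref{appendix:2_centres} rules out collisions inside the arcs. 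Third, a diagonal compactness argument promotes the family of finite minimizers to a continuous, shift-equivariant map $\pi\colon \Sigma^{\mathrm{SFT}}_{\mathcal{A}}\to \Sigma_{h,\elld}$, whose image $\Lambda$ is the desired compact invariant set. Fourth, since $\Sigma^{\mathrm{SFT}}_{\mathcal{A}}$ has positive topological entropy, the semi-conjugacy forces $\Phi|_{\Lambda}$ to have positive entropy, which is incompatible with analytic integrability: an analytic first integral independent from $H$ would foliate the three-dimensional level $\Sigma_{h,\elld}$ by two-dimensional invariant tori, on which any symplectic map has zero entropy.

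The main obstacle I foresee lies in the collisionless minimization step, because the Newtonian exponent $\alpha_i=1$ is below the strong-force threshold and Gordon's classical lemma does not directly apply to exclude collisions in each homotopy class. To overcome this I plan to exploit the integrability of the unperturbed 2-centre flow: the explicit parametrization \eqref{eq:parametrization_xi}--\eqref{eq:parametrization_eta} in elliptic coordinates gives sharp control on the Maupertuis length of any path spiraling into a centre, and comparing it with a non-colliding competitor lying near the corresponding separatrix provides the needed exclusion. A secondary technical point is to check that admissibility is robust enough to prevent the gluing of two consecutive minimizing arcs from producing a path that grazes $\elld$ between two scheduled bounces; since admissibility is an open condition and $s_{\pm\infty}((0,\infty))$ is strictly inside $\Pi_{\elld}$, a continuity/compactness argument, quantified on the finite alphabet $\mathcal{A}$, should suffice.
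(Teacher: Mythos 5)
Your overall strategy is the right one and matches the paper's in spirit: the minimizers shadowing the separatrices $s_{\pm\infty}$ replace the escaping orbits of the perturbative regime, admissibility confines the shadowing arcs to $\Pi_{\elld}$, and the orbits are produced by free–endpoint minimization of the Jacobi–Maupertuis length followed by a diagonal compactness argument. However, there are two genuine gaps. The first is in your choice of alphabet. The paper's construction hinges on Proposition~\ref{prop:existence_eteroclinics}: the minimizers $\gamma_k^{x,y}$ converge in $C^2$ to $s^x_{\pm\infty}$ only as the winding number $k\to\pm\infty$, and it is only for $\vert i_j\vert> k_0$ \emph{large}, with alternating signs, that one can (i) confine the bouncing points to small intervals $I_\pm$ around the feet of $s_{\pm\infty}$, (ii) guarantee that the arcs stay inside $\Pi_{\elld}$, and (iii) get uniformly transversal (in fact almost orthogonal) reflections. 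A symbol prescribing ``which separatrix is shadowed near each endpoint'' together with a homotopy class of \emph{bounded} complexity is not a well-posed constraint for the minimization: shadowing a separatrix is not a homotopy class, and once you bound the complexity you lose exactly the large-winding hypothesis that makes the shadowing, the confinement and the transversality work (Figure~\ref{fig:concavity_line} shows that low-winding minimizers need not stay in $\Pi_{\elld}$ at all). Moreover, classes making a half-turn around a \emph{single} Newtonian centre are precisely the ones where collisions cannot be excluded topologically for $\alpha=1$ (compare the remark after Theorem~\ref{thm:main_perturbative} requiring $\alpha>\tfrac43$ when the centres coincide); the paper sidesteps this entirely by using classes that wind many times around \emph{both} centres, where a blow-up argument excludes collisions.

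The second gap is the direction of your semi-conjugacy in Steps 3--4. A continuous shift-equivariant surjection $\pi\colon\Sigma^{\mathrm{SFT}}_{\mathcal A}\to\Lambda$ exhibits $\Phi\vert_\Lambda$ as a topological \emph{factor} of the subshift, which yields only the upper bound $h_{\mathrm{top}}(\Phi\vert_\Lambda)\le h_{\mathrm{top}}(\sigma)$ and therefore does not force positive entropy (every system factors onto a point). To conclude, you need the coding map to go the other way: a continuous equivariant surjection from the invariant set \emph{onto} the subshift, which is what Section~\ref{sec:symbolic} constructs by recording crossings of a transversal half-line $\Sigma$ and showing that the constraints $\vert i_j\vert>k_0$, $\mathrm{sign}(i_j)=-\mathrm{sign}(i_{j+1})$ translate into a finite-type condition on the symbol sequences. (Your $\pi$ could in principle be upgraded to a conjugacy via the uniqueness of minimizers, Lemma~\ref{lemma:uniqueness_length_minimizers}, but you neither claim nor prove injectivity or continuity of $\pi$, and neither is automatic.) A smaller inaccuracy: the billiard map acts on the two-dimensional section $\Sigma_{h,\elld}$, not on a three-dimensional energy level, so the non-integrability argument should be phrased as the incompatibility of a non-constant analytic invariant function on a surface with positive topological entropy of the restriction to a compact invariant set, rather than via invariant tori.
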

    \begin{rem}
    	Heuristically, Theorem \ref{thm:main_2_centres} is sharper than Theorem \ref{thm:main_perturbative}. Indeed, with the normalization $c_i = (-1)^i(1,0)$, the convexity assumption needed in the proof of Theorem \ref{thm:main_perturbative} holds outside a ball of radius $2$. On the contrary, the $s_{\pm \infty}$ converge exponentially fast to the collision-ejection solution (see Figure \ref{fig:admissible_lines}) and the invariant subset in Theorem \ref{thm:main_2_centres} is localized around them. The proof of Theorem \ref{thm:main_2_centres}, with due modifications, works also in the case $\#\{c_1,\dots,c_n\}>2$ or in the non purely Newtonian $2-$centre cases. However, since these systems are non-integrable (or expected to be non-integrable) this does not really constitute a quantitative improvement of Theorem \ref{thm:main_perturbative}.
    \end{rem}

    The structure of the paper is the following. In Section \ref{section:minimizers_JM_length} we collect some background material needed in the proofs. Section \ref{section:perturbative_version} is devoted to the construction of the invariant set of Theorem \ref{thm:main_perturbative} and Section \ref{section:2_centres} to the one of Theorem \ref{thm:main_2_centres}. In Section \ref{sec:symbolic} we set up explicitly the conjugation with a shift system. In the Appendix \ref{appendix:2_centres} we give some further details about the integrability of the $2-$centres problem and the trajectories $s_{\pm\infty}$.

\section{Variational set-up and the JM metric}
   \label{section:minimizers_JM_length}
\subsection{The Jacobi-Maupertuis metric}
   Let us recall the so-called \emph{Jacobi-Maupertuis} principle. Let $(M,g)$ be a smooth Riemannian manifold and $H'$ a natural Hamiltonian of the form
    \[
   H' = \frac12 g^{-1}(p,p)-V(q).
   \]
    For $h> \sup_{q\in M} V(q)$, the Hamiltonian flow given by $\vec{H'}$ at energy $h$ on $T^*M$ is equivalent to the co-geodesic flow of the Jacobi-Maupertuis metric
   \begin{equation}
   	\label{eq:JMmetric}
     g_{JM}(v,v) = 2(h-V(q))g(v,v),
   \end{equation}
   on the unit cotangent bundle.
   Any integral curve of $\vec{H'}$ is a re-parametrization of a geodesic of $g_{JM}$ and vice versa. In what follows, we shall apply the Lagrangian version of the principle. The solutions of the Euler-Lagrange equations \eqref{eq:motion_equation} with energy $h$ are unparametrized geodesics of $g_{JM}$.
   
   For our choice of potential \eqref{eq:potential}, it turns that the Jacobi-Maupertuis metric $g_{JM}$ is asymptotically flat and negatively curved, as the following Lemma shows. Moreover, the line $\elld$ is concave when we pick the normal pointing towards the centres, as in Definition \ref{def:mirror}.
    \begin{lemma}[Curvatures]
    	Let $\gamma$ be a unit speed parametrization (with respect to the Jacobi-Maupertuis metric) of $\elld$ and let  $\tilde{\nabla}$ be the Levi-Civita connection. Then
    	\[
    	   \tilde{\nabla}_{\dot{\gamma}} \dot{\gamma} =   \frac{\langle 	\nabla V, v \rangle v-\nabla V }{4(h+V)^2}.
    	\]
    	In particular, taking the normal to $\elld$ pointing towards the centres, its curvature is negative. Moreover, the Gaussian curvature of the Jacobi-Maupertuis metric is 
    	\[
    	\kappa_{JM}= \frac{\Delta V}{2(h+V)}-\frac{\vert \nabla V \vert^2}{(2(h+V))^2}<0.
    	 \]
    	\begin{proof}
    		The part about the Gaussian curvature follows from \cite[Lemma 6.12]{baranzini2024chaotic}.    		
    		We now compute the covariant derivative $\tilde{\nabla}_{\dot{\gamma}}\dot{\gamma}$ with respect to the Jacobi-Maupertuis metric \eqref{eq:JMmetric}. Let us denote by $v,w$ two constant vector fields. Standard formulas for conformal changes of the metric (see for instance \cite[Theorem 7.30]{lee_book})  show that 
    	\[
    	\tilde{\nabla}_{v}v = \nabla_{v}v +  2 \langle \nabla \varphi,v\rangle  v -\nabla \varphi, \quad \text{where} \quad \varphi =\frac{1}{2} \log(2(h+V)).
    	\]
    	Since $\nabla \varphi = \frac{\nabla V}{2(h+V)}$, the covariant derivative reads
    	\[
    	\tilde{\nabla}_vv= \frac{\langle 	\nabla V, v \rangle }{h+V}v- \frac{\nabla V}{2(h+V)}
    	\]
    	Unit-speed parametrizations of $\elld$ are given by the flow lines of the vector field 
    	\[
    	X = f v =  \frac{v}{\sqrt{2(h+V)}}, \quad \tilde{\nabla}_XX = f^2\tilde{\nabla}_vv+f v(f) v.
    	\]		
    	Thus the covariant derivative reads
    	\begin{align*}
    		\tilde{\nabla}_{\dot{\gamma}}\dot{\gamma} &= \tilde{\nabla}_XX =  \frac{\langle 	\nabla V, v \rangle }{2(h+V)^2}v- \frac{\nabla V}{4(h+V)^2}-\frac{\langle \nabla V, v\rangle }{4(h+V)^2} v
    		\\ &=\frac{   \langle 	\nabla V, v \rangle v-\nabla V}{(2(h+V))^{2}}.
    	\end{align*}
    
     Let us observe that, if $w$ is orthogonal to $v$, then $\tilde{\nabla}_{\dot{\gamma}}\dot{\gamma} = -\frac{\langle \nabla V,w \rangle}{4(h+V)^2} w$ and thus the curvature Gaussian curvature of the boundary is negative if and only if $w$ points towards the centres. Indeed, choosing the origin on the line $\elld$ and parametrizing it as $t v $,  the Gaussian curvature reads
     \[
         {(2(h+V))^{\frac32}}\kappa_{\elld}(t) = -\langle \nabla V,w\rangle = - \sum_{i=1}^n\frac{ \alpha_i\langle c_i,w\rangle }{\vert t v -c_i \vert^{\alpha_i+2}}.
     \]     
     \end{proof}
 \end{lemma}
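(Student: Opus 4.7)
The plan is to address the two curvature computations in sequence, both exploiting the conformal factorization $g_{JM} = e^{2\varphi} g_{\mathrm{Euc}}$ with $\varphi = \tfrac12 \log(2(h+V))$.

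For the Gaussian curvature identity I would simply invoke Lemma 6.12 of \cite{baranzini2024chaotic}, as the author does: starting from the classical two-dimensional formula $\kappa_{JM} = -e^{-2\varphi}\Delta_{\mathrm{Euc}}\varphi$ and expanding the logarithmic Laplacian gives the announced expression after routine simplification, while strict negativity is checked termwise on the explicit sum in \eqref{eq:potential}.

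The substantive task is the boundary covariant derivative. I would start from the classical conformal-change identity for Levi-Civita connections,
\[
\tilde\nabla_X Y = \nabla_X Y + d\varphi(X)\,Y + d\varphi(Y)\,X - g_{\mathrm{Euc}}(X,Y)\,\nabla\varphi,
\]
specialized to $X = Y = v$, where $v$ is the constant Euclidean-unit tangent to $\elld$. Since $\nabla_v v = 0$ and $|v|=1$, this immediately yields $\tilde\nabla_v v = 2\langle \nabla\varphi,v\rangle v - \nabla\varphi$, and inserting $\nabla\varphi = \nabla V /(2(h+V))$ produces a first formula for $\tilde\nabla_v v$. I would then pass from the Euclidean-unit field $v$ to the $g_{JM}$-unit tangent $X = fv$ with $f = 1/\sqrt{2(h+V)}$. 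Using the Leibniz rule $\tilde\nabla_X X = f^2\tilde\nabla_v v + f\,v(f)\,v$, the tangential ($v$-component) contribution of $f^2\tilde\nabla_v v$ partially cancels against $fv(f)\,v$, collapsing the expression to the announced $(\langle \nabla V, v\rangle v - \nabla V)/(4(h+V)^2)$.

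For the sign statement, I would decompose $\nabla V$ in the orthonormal basis $\{v,w\}$: the right-hand side reduces to $-\langle \nabla V, w\rangle\, w/(4(h+V)^2)$, so the geodesic curvature with respect to the inward normal $w$ has the sign of $-\langle \nabla V, w\rangle$. Using $\nabla V = -\sum_i \alpha_i m_i (x - c_i)/|x-c_i|^{\alpha_i+2}$ and the fact that every $c_i$ lies strictly on the $+w$ side of $\elld$, each summand contributes a strictly positive amount to $\langle \nabla V, w\rangle$ along $\elld$, so the geodesic curvature is strictly negative. The only real obstacle is bookkeeping: tracking the rescaling factor $f$ and its derivative, and keeping the signs in the two conformal-change formulas consistent. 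Nothing deeper than the classical conformal identities is required, which is why the author presents it as a preparatory lemma.
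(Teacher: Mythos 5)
Your proposal is correct and follows essentially the same route as the paper: the conformal-change identity for the Levi--Civita connection applied to the constant Euclidean-unit tangent $v$, rescaling to the $g_{JM}$-unit field $X=fv$ with $f=(2(h+V))^{-1/2}$ via the Leibniz rule, the $\{v,w\}$ decomposition for the sign of the boundary curvature, and delegation of the Gaussian curvature to the cited lemma. The only caveat is that the raw computation $\kappa=-e^{-2\varphi}\Delta_{\mathrm{Euc}}\varphi$ yields $-\frac{\Delta V}{4(h+V)^2}+\frac{\vert\nabla V\vert^2}{4(h+V)^3}$ rather than the displayed expression verbatim (the display appears to carry a different normalization), so if you actually perform the ``routine simplification'' you should reconcile the conventions with the cited lemma; the negativity argument is unaffected.
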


\subsection{Some properties of length-minimizers}
	
Let us fix $k \in \mathbb{Z}^* = \mathbb{Z}\setminus\{0\}$. Since we  will assume that there are at least two distinct centres, we may suppose that $c_1 \neq c_2$. The first task of this section is to build a foliation of length-minimizing curves with endpoints on $\elld$ winding $k-$times around $c_1$ and $c_2$, relatively to $\elld$. With a slight abuse of notation, we will denote by $[x,y]$ a curve parametrizing the segment with endpoints $x$ and $y$. 

We will say that a continuous curve $\eta: [a,b] \to \mathbb{R}^2\setminus \{c_1,\dots,c_n\}$ with $\eta(a),\eta(b) \in\elld$ winds  $k-$times around the centres if $\eta*[\eta(b),\eta(a)]$ is homotopic to a simple curve enclosing $c_1$ and $c_2$ run $k-$times. Here and throughout the paper the symbol $*$ stands for path concatenation.

\begin{definition}
		\label{def:minimizers}
		 Let $\sigma\subseteq \mathbb{R}^2 \setminus\{c_1,\dots,c_n\}$ be a simple curve enclosing only $c_1$ and $c_2$. For any $x,y \in \elld$, $k \in \mathbb{Z}^*$ let us denote by $\gamma_k^{x,y}$ any minimizer of the Jacobi-Maupertuis length in the weak closure of the space
		 \[
		  \mathcal{C}^{x,y}_k := \{ \eta \in H^1(\mathbb{R}^2\setminus \left\{ c_1,\dots c_n\right\}): \,\eta(0) = x,\, \eta(1) = y, \,  \eta*[y,x]\sim \sigma^k \},
		 \]		
		 i.e. the closure of paths from $x$ to $y$ winding $k-$times around $c_1$ and $c_2$.
		 Let us denote by $\gamma_k$ any length-minimizer in the weak closure of  \[\mathcal{C}_k:= \bigcup_{x,y\in \elld} \mathcal{C}_k^{x,y},\]		 
		 i.e. any minimizer among path with free endpoints on $\elld$ winding $k-$times.
	\end{definition}

	Of course, length minimizers in general might collide with the centres. However, our choice of space $\mathcal{C}_k^{x,y}$ guarantees that this is not the case. 
	\begin{lemma}
		\label{lemma:uniqueness_length_minimizers}
		The minimizers $\gamma_k$ and $\gamma_k^{x,y}$ are unique and completely contained in $\mathbb{R}^2\setminus \left\{c_1,\dots,c_n\right\}$.
	\begin{proof}
		The fact that minimizers are indeed collision-less can be proved applying the same local argument as in \cite[Section 4.2]{baranzini2024chaotic} or \cite[Proposition 5.1]{BolKoz2017}.		
		Uniqueness follows then from an application of Gauss-Bonnet theorem.
		
	    Let $\sigma$ a simple loop enclosing $c_1$ and $c_2$. Minimizers in the homotopy class of $\sigma$ may be collisional but, in any case, no $\gamma_{k}^{x,y}$ can intersect any such minimizers (compare with \cite[Proposition 3.12]{baranzini2024chaotic}). This implies that all $\gamma_{k}^{x,y}$ are contained in an open set $\mathcal{U}$ homeomorphic to $\mathbb{R}^2 \setminus [c_1,c_2]$.

		Let us assume that two distinct minimizers in $\mathcal{C}_k^{x,y}$ exists and denote them by $\eta_1$ and $\eta_2$. Let $\alpha_i$ be the turning angles between the velocities of $\eta_1$ and $\eta_2$ at $x$ and $y$.  Let us denote by $\Omega$ the region bounded by the two curves. Up to passing to a $k-$sheeted cover of $\mathcal{U} \sim \mathbb{R}^2 \setminus [c_1,c_2]$, we can assume that $\Omega$ is homeomorphic to a disk. Gauss-Bonnet theorem then implies that
		\[
		\int_{\Omega} \kappa_{JM}  =   2 \pi-\alpha_1-\alpha_2 .
		\]
		However, this is possible if and only if $\eta_1 = \eta_2$.
		
		The argument for minimizers in $\mathcal{C}_k$ is analogous. Assume that $\eta_1$ and $\eta_2$ are minimizers, with endpoints $x_i$ and $y_i$ respectively. Let $\Omega$ be the region bounded by $\eta_1$, $[y_1,y_2]$, $\eta_2^{-1}$ and $[x_2,x_1]$. 
		Since $\eta_i$ are minimizers, the velocities at the endpoints make a $\pi/2$ angle with $\elld$ and the sum of the turning angles is $2\pi$. It follows that 
		\[
			\int_{\Omega} \kappa_{JM}  = - \int_{\partial \Omega} \kappa_{\partial \Omega}.\]
		
		Thanks to Lemma \ref{lemma:convexity_mirror}, the curvature of the segments $[x_2,x_1]$ and $[y_1,y_2]$ is negative. Thus, the only possibility is again that $\eta_1$ and $\eta_2$ coincide.

	\end{proof}	 
    \end{lemma}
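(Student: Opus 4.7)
My approach splits the lemma into two independent claims: any minimizer avoids the singular set $\{c_1,\dots,c_n\}$, and it is unique within its class. The first relies on a local variational argument available in the literature; the second exploits the Gauss-Bonnet theorem together with the strict negativity of the Gaussian curvature of $g_{JM}$ established in the previous lemma, together with the concavity of $\elld$.

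For the non-collision part, I would argue by contradiction: if a putative minimizer passed through some $c_i$, one could replace the offending arc with a short detour around $c_i$, following the local blow-up construction in \cite{BolKoz2017, baranzini2024chaotic}. Because the winding requirements defining $\mathcal{C}_k^{x,y}$ and $\mathcal{C}_k$ are encoded only by linking with the pair $(c_1,c_2)$, such a local modification stays in the same homotopy class, and the assumption $\alpha_i\geq 1$ makes the detour strictly shorter in Jacobi-Maupertuis length than any collisional competitor.

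For uniqueness, suppose $\eta_1\neq\eta_2$ are two minimizers in $\mathcal{C}_k^{x,y}$. They must be disjoint away from $x,y$, since otherwise one could cut and splice the two curves at a transverse intersection point and obtain a shorter competitor in the same homotopy class. Together with $x,y$ they therefore bound an open region $\Omega$, which a priori need not be simply connected because of the winding; lifting to the $k$-fold cyclic cover of $\mathbb{R}^2\setminus[c_1,c_2]$, however, turns it into a topological disk. Applying Gauss-Bonnet, the boundary consists of two geodesic arcs meeting at the corners $x,y$ with exterior angles $\alpha_1,\alpha_2\in(0,\pi)$, yielding
\[
\int_{\Omega}\kappa_{JM}\,dA \;=\; 2\pi-\alpha_1-\alpha_2 \;>\;0,
\]
which contradicts $\kappa_{JM}<0$. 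For the free-endpoint minimizers $\gamma_k$, the same argument applies to a disk with four corners: besides the corners of $\Omega$ inside the half-plane, there are two corners on $\elld$ where free-endpoint minimality forces $\eta_i$ to meet the line orthogonally, so the sum of exterior angles equals $2\pi$. The boundary integral along the two segments of $\elld$ is non-positive because the geodesic curvature of $\elld$ is negative when the normal points towards the centres, and one reaches the same contradiction.

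The main obstacle I foresee is the topological reduction needed to make $\Omega$ a disk. Concretely, one must verify that any two candidate minimizers live in a common open set homeomorphic to $\mathbb{R}^2\setminus[c_1,c_2]$, so that the $k$-fold cover is well-defined and $\Omega$ lifts to a disk. This amounts to showing that no minimizer in $\mathcal{C}_k^{x,y}$ can cross a (possibly collisional) length-minimizer in the free homotopy class of a simple loop enclosing $c_1,c_2$, which is a variant of the non-crossing property of length-minimizers in distinct homotopy classes already established in \cite{baranzini2024chaotic}.
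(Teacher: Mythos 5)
Your proposal is correct and follows essentially the same route as the paper: the non-collision claim is delegated to the local blow-up argument of \cite{BolKoz2017,baranzini2024chaotic}, and uniqueness is obtained from Gauss--Bonnet after passing to the $k$-fold cover of a set homeomorphic to $\mathbb{R}^2\setminus[c_1,c_2]$, using $\kappa_{JM}<0$, orthogonality of free-endpoint minimizers to $\elld$, and the negative curvature of $\elld$. The ``main obstacle'' you single out (confining all minimizers to a common set where the cover is defined, via non-crossing with a minimizer in the class of a simple loop around $c_1,c_2$) is exactly the step the paper handles first, so nothing is missing.
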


The last observation we make concerns the concavity of the line $\elld$. Roughly speaking, we can say that homotopically trivial length minimizers between two points $x,y \in \elld$ are completely contained in the half-plane without the centres $\overline{\Pi_{\elld}^c}$. Notice that this does not imply that minimizers $\gamma_{k}^{x,y}$ are contained in $\Pi_{\elld}$ as Figure \ref{fig:concavity_line} shows.
\begin{lemma}
	\label{lemma:convexity_mirror}
	Let $\eta:[0,T]\to \mathbb{R}^2\setminus\{c_1,\dots,c_n\}$ be a solution of \eqref{eq:motion_equation}. If $\eta(0)$, $\eta(T)\in \ell_d$ and $\dot{\eta}(0)$ points inside $\Pi_{\elld}$, then $\eta$ is homotopically non trivial, relatively to $\elld$.
	\begin{proof}
		The statement follows from an application of Gauss-Bonnet theorem. Since $\dot\eta(0)$ points towards the centres, $\eta(t)$ is contained in $\Pi_{\elld}$ for $t$ small. This means that the inner normal to the boundary of the disk $\Omega$ bound by $\eta$ and the segment $[\eta(0),\eta(T)]$ points toward the centers. In particular, the segment $[\eta(0),\eta(T)]$ has negative Gaussian curvature. Let $\alpha_0$ and $\alpha_T$ be the turning angles at the endpoints of $\eta$, we have
		\[
		\int_\Omega \kappa_{JM} + \int_{\partial \Omega} \kappa_\eta = 2\pi -(\alpha_0+\alpha_1).
		\]
		However the right hand side is non negative whereas the left one is strictly negative, a contradiction.
	\end{proof}
\end{lemma}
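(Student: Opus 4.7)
My plan is to derive a contradiction by a Gauss--Bonnet computation in the Jacobi--Maupertuis metric. Suppose, for contradiction, that the loop $\eta * [\eta(T),\eta(0)]$ is contractible in $\mathbb{R}^2\setminus\{c_1,\dots,c_n\}$. Then it bounds an embedded topological disk $\Omega$ disjoint from every centre. I would first pin down the orientation: since $\dot\eta(0)$ points into $\Pi_\elld$, the curve $\eta$ is in $\Pi_\elld$ for $t$ slightly larger than $0$, hence the interior of $\Omega$ lies on the $\Pi_\elld$--side near $\eta(0)$. Consequently the inward unit normal of $\partial\Omega$ along the chord $[\eta(0),\eta(T)]\subset\elld$ is precisely the normal to $\elld$ pointing towards the centres.

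Next, I would apply Gauss--Bonnet to $\Omega$ with respect to $g_{JM}$:
\[
\int_\Omega \kappa_{JM}\,dA \;+\; \int_{\partial\Omega}\kappa_g\,ds \;+\; \alpha_0+\alpha_T \;=\; 2\pi,
\]
where $\alpha_0,\alpha_T\in[-\pi,\pi]$ are the exterior angles at the two corners $\eta(0),\eta(T)$, and $\kappa_g$ is the geodesic curvature computed with the inward normal of $\Omega$. I would then assemble the sign information coming from the preceding \emph{Curvatures Lemma}: (i) $\kappa_{JM}<0$ throughout $\Omega$, so $\int_\Omega\kappa_{JM}\,dA<0$; (ii) along the $\eta$--arc $\kappa_g\equiv 0$, since by the Jacobi--Maupertuis principle $\eta$ is an unparametrized $g_{JM}$--geodesic; (iii) along the chord $\kappa_g<0$, because the inward normal was just identified as the one pointing towards the centres, which is exactly the sign convention of the Curvatures Lemma; (iv) the angle contribution satisfies $\alpha_0+\alpha_T\leq 2\pi$.

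Combining (i)--(iii) gives
\[
\int_\Omega \kappa_{JM}\,dA + \int_{\partial\Omega}\kappa_g\,ds \;<\; 0,
\]
while (iv) forces the right--hand side of the Gauss--Bonnet identity, $2\pi-\alpha_0-\alpha_T$, to be non--negative. This contradiction shows the loop cannot be contractible, i.e.\ $\eta$ is homotopically non--trivial relative to $\elld$.

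The only genuinely delicate point is step one: correctly identifying the side of $\elld$ on which $\Omega$ sits, and therefore the orientation of the inward normal along the chord. This is exactly where the hypothesis $\dot\eta(0)\in\Pi_\elld$ is used --- without it, the sign of $\kappa_g$ along the chord would flip and the whole computation would collapse. Everything else is an essentially mechanical bookkeeping of signs from the Curvatures Lemma.
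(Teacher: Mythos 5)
Your proposal is correct and follows essentially the same argument as the paper: contradiction via Gauss--Bonnet on the disk bounded by $\eta$ and the chord in $\elld$, using $\kappa_{JM}<0$, the vanishing geodesic curvature of the geodesic arc, the negative curvature of the chord once the inward normal is identified as pointing towards the centres, and the non-negativity of $2\pi-\alpha_0-\alpha_T$. Your write-up is in fact slightly more explicit than the paper's on the sign bookkeeping (in particular on why the angle term is bounded by $2\pi$ and why $\kappa_g\equiv 0$ along $\eta$).
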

\begin{figure}
	\label{fig:concavity_line}
	\caption{The sign of the curvature depends on the orientation of the boundary. The drawing on the left does not contradict Gauss Bonnet theorem. On the right we are in the situation of Lemma \ref{lemma:convexity_mirror}}

\tikzset{
	pattern size/.store in=\mcSize, 
	pattern size = 5pt,
	pattern thickness/.store in=\mcThickness, 
	pattern thickness = 0.3pt,
	pattern radius/.store in=\mcRadius, 
	pattern radius = 1pt}
\makeatletter
\pgfutil@ifundefined{pgf@pattern@name@_qavrjy5sl}{
	\pgfdeclarepatternformonly[\mcThickness,\mcSize]{_qavrjy5sl}
	{\pgfqpoint{0pt}{-\mcThickness}}
	{\pgfpoint{\mcSize}{\mcSize}}
	{\pgfpoint{\mcSize}{\mcSize}}
	{
		\pgfsetcolor{\tikz@pattern@color}
		\pgfsetlinewidth{\mcThickness}
		\pgfpathmoveto{\pgfqpoint{0pt}{\mcSize}}
		\pgfpathlineto{\pgfpoint{\mcSize+\mcThickness}{-\mcThickness}}
		\pgfusepath{stroke}
}}
\makeatother


\tikzset{
	pattern size/.store in=\mcSize, 
	pattern size = 5pt,
	pattern thickness/.store in=\mcThickness, 
	pattern thickness = 0.3pt,
	pattern radius/.store in=\mcRadius, 
	pattern radius = 1pt}
\makeatletter
\pgfutil@ifundefined{pgf@pattern@name@_63zzv7q6p}{
	\pgfdeclarepatternformonly[\mcThickness,\mcSize]{_63zzv7q6p}
	{\pgfqpoint{0pt}{-\mcThickness}}
	{\pgfpoint{\mcSize}{\mcSize}}
	{\pgfpoint{\mcSize}{\mcSize}}
	{
		\pgfsetcolor{\tikz@pattern@color}
		\pgfsetlinewidth{\mcThickness}
		\pgfpathmoveto{\pgfqpoint{0pt}{\mcSize}}
		\pgfpathlineto{\pgfpoint{\mcSize+\mcThickness}{-\mcThickness}}
		\pgfusepath{stroke}
}}
\makeatother
\tikzset{every picture/.style={line width=0.75pt}} 

\begin{tikzpicture}[x=0.75pt,y=0.75pt,yscale=-1,xscale=1]
	
	\draw [line width=1.5]    (126,49) -- (222.33,209) ;
	\draw [line width=1.5]  [dash pattern={on 5.63pt off 4.5pt}]  (108.67,19) -- (126,49) ;
	\draw [line width=1.5]  [dash pattern={on 5.63pt off 4.5pt}]  (222.33,209) -- (239.67,239) ;
	\draw  [fill={rgb, 255:red, 155; green, 155; blue, 155 }  ,fill opacity=1 ] (201,72) .. controls (201,70.34) and (202.34,69) .. (204,69) .. controls (205.66,69) and (207,70.34) .. (207,72) .. controls (207,73.66) and (205.66,75) .. (204,75) .. controls (202.34,75) and (201,73.66) .. (201,72) -- cycle ;
	\draw  [fill={rgb, 255:red, 155; green, 155; blue, 155 }  ,fill opacity=1 ] (231,62) .. controls (231,60.34) and (232.34,59) .. (234,59) .. controls (235.66,59) and (237,60.34) .. (237,62) .. controls (237,63.66) and (235.66,65) .. (234,65) .. controls (232.34,65) and (231,63.66) .. (231,62) -- cycle ;
	\draw    (137,70) .. controls (177,40) and (248.33,35) .. (246.33,59) .. controls (244.33,83) and (207.33,97) .. (192.33,86) .. controls (177.33,75) and (202.33,24) .. (236.33,30) .. controls (270.33,36) and (258.33,105) .. (190.33,155) ;
	\draw  [pattern=_qavrjy5sl,pattern size=6pt,pattern thickness=0.75pt,pattern radius=0pt, pattern color={rgb, 255:red, 0; green, 0; blue, 0}] (163.33,206) .. controls (156.33,175) and (163.33,172) .. (190.33,155) .. controls (202.33,177) and (204.33,177) .. (222.33,209) .. controls (198.33,233) and (172.33,245) .. (163.33,206) -- cycle ;
	\draw  [fill={rgb, 255:red, 0; green, 0; blue, 0 }  ,fill opacity=1 ] (185.07,236.22) -- (196.06,230.12) -- (184.59,224.98) -- (190.44,230.36) -- cycle ;
	\draw  [fill={rgb, 255:red, 0; green, 0; blue, 0 }  ,fill opacity=1 ] (207.92,174.07) -- (198.06,166.27) -- (197.73,178.84) -- (200.44,171.36) -- cycle ;
	\draw [color={rgb, 255:red, 155; green, 155; blue, 155 }  ,draw opacity=1 ][line width=1.5]    (460,175) -- (490.83,157.94) ;
	\draw [shift={(494.33,156)}, rotate = 151.04] [fill={rgb, 255:red, 155; green, 155; blue, 155 }  ,fill opacity=1 ][line width=0.08]  [draw opacity=0] (11.61,-5.58) -- (0,0) -- (11.61,5.58) -- cycle    ;
	\draw [line width=1.5]    (385,50) -- (481.33,210) ;
	\draw [line width=1.5]  [dash pattern={on 5.63pt off 4.5pt}]  (367.67,20) -- (385,50) ;
	\draw [line width=1.5]  [dash pattern={on 5.63pt off 4.5pt}]  (481.33,210) -- (498.67,240) ;
	\draw  [fill={rgb, 255:red, 155; green, 155; blue, 155 }  ,fill opacity=1 ] (524.54,120.14) .. controls (525.28,118.67) and (527.09,118.07) .. (528.57,118.82) .. controls (530.05,119.57) and (530.64,121.37) .. (529.89,122.85) .. controls (529.14,124.33) and (527.34,124.92) .. (525.86,124.18) .. controls (524.38,123.43) and (523.79,121.62) .. (524.54,120.14) -- cycle ;
	\draw  [fill={rgb, 255:red, 155; green, 155; blue, 155 }  ,fill opacity=1 ] (555.82,124.76) .. controls (556.57,123.28) and (558.37,122.68) .. (559.85,123.43) .. controls (561.33,124.18) and (561.92,125.98) .. (561.18,127.46) .. controls (560.43,128.94) and (558.62,129.53) .. (557.14,128.79) .. controls (555.67,128.04) and (555.07,126.23) .. (555.82,124.76) -- cycle ;
	\draw    (420.36,107.63) .. controls (390.33,134) and (372.33,118) .. (364.33,85) .. controls (356.33,52) and (456.33,50) .. (508.33,55) .. controls (560.33,60) and (593.33,102) .. (570.86,129) .. controls (548.38,155.99) and (518.91,145.31) .. (510.49,128.73) .. controls (502.06,112.14) and (541.33,89) .. (575.02,98.6) .. controls (608.7,108.21) and (552.58,175.06) .. (469.33,189) ;
	\draw  [pattern=_63zzv7q6p,pattern size=6pt,pattern thickness=0.75pt,pattern radius=0pt, pattern color={rgb, 255:red, 0; green, 0; blue, 0}] (479.36,110.64) .. controls (486.36,141.64) and (479.36,144.64) .. (452.35,161.63) .. controls (440.36,139.63) and (438.36,139.63) .. (420.36,107.63) .. controls (444.37,83.63) and (470.37,71.64) .. (479.36,110.64) -- cycle ;
	\draw  [fill={rgb, 255:red, 0; green, 0; blue, 0 }  ,fill opacity=1 ] (487.46,127.56) -- (481.03,116.75) -- (476.25,128.38) -- (481.44,122.36) -- cycle ;
	\draw  [fill={rgb, 255:red, 0; green, 0; blue, 0 }  ,fill opacity=1 ] (426.62,130.1) -- (437,137.19) -- (436.44,124.63) -- (434.26,132.28) -- cycle ;
	\draw [color={rgb, 255:red, 155; green, 155; blue, 155 }  ,draw opacity=1 ][line width=1.5]    (179,136) -- (146.9,152.2) ;
	\draw [shift={(143.33,154)}, rotate = 333.22] [fill={rgb, 255:red, 155; green, 155; blue, 155 }  ,fill opacity=1 ][line width=0.08]  [draw opacity=0] (11.61,-5.58) -- (0,0) -- (11.61,5.58) -- cycle    ;
	
	\draw (145.17,119.9) node [anchor=north west][inner sep=0.75pt]    {$w$};
	\draw (474.17,165.9) node [anchor=north west][inner sep=0.75pt]    {$w$};
	\draw (228,152.4) node [anchor=north west][inner sep=0.75pt]    {$\kappa _{\ell }  >0$};
	\draw (376,142.4) node [anchor=north west][inner sep=0.75pt]    {$\kappa _{\ell } < 0$};

\end{tikzpicture}
\end{figure}
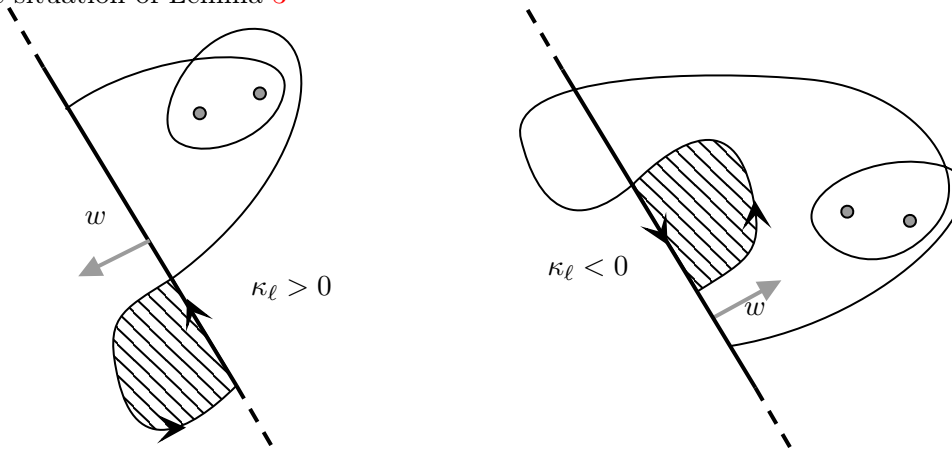

\section{The high - energy/distant mirror regime}
\label{section:perturbative_version}

Let us pick two distinct integers $k,m\in \mathbb{Z}^*$. Thanks to Lemma \ref{lemma:uniqueness_length_minimizers}, for any $x,y\in \elld$ we have a unique path $\gamma_k^{x,y}$ (and $\gamma_m^{x,y}$ respectively) joining $x$ to $y$. Let us consider a finite sequence $(i_1, \dots, i_r)$ such that $i_j\in\{k,m\}$ and define the following family of closed paths
\begin{equation}
	\label{eq:def_space_paths}
\mathcal{C}_{(i_1, \dots, i_r)} = \{\gamma_{i_1}^{x_1,x_2}*\gamma_{i_2}^{x_2,x_3}*\dots*\gamma_{i_r}^{x_{r},x_1}: x_j\in\elld\}.
\end{equation}
Here as before $\gamma*\eta$ stands for the concatenation of paths. We now look for minimizers of the Jacobi-Maupertuis length in the space $\mathcal{C}_{(i_1,\dots,i_r)}$ and prove the following

\begin{thm}
	\label{thm:existence_periodic}
	For any $r>1$ and  $(i_1,\dots,i_r)$, minimizers exists. Moreover, there exist $d_0(v,h)>0$ such that, for any $k,m$ and $d\ge d_0(v,h)$ minimizers are billiard trajectories, i.e. completely contained in the half-plane $\Pi_{\elld}$.	
\end{thm}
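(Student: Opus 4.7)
The plan is to prove existence by the direct method on a finite-dimensional functional over $\elld^r$, and then to combine the Lagrange--Jacobi inequality with the concavity of $\elld$ from Lemma~\ref{lemma:convexity_mirror} to show that, for $d$ large enough, the minimizing concatenation lies entirely in $\Pi_\elld$.

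For existence I introduce
\[
F \colon \elld^r \longrightarrow [0,+\infty), \qquad F(x_1,\dots,x_r)=\sum_{j=1}^{r} L_{JM}\bigl(\gamma_{i_j}^{x_j,x_{j+1}}\bigr),
\]
with indices mod $r$. Lemma~\ref{lemma:uniqueness_length_minimizers} gives uniqueness and continuous dependence of $\gamma_k^{x,y}$ on the endpoints, so $F$ is continuous. For coerciveness I use the pointwise inequality $g_{JM}\ge 2h\,g_{\mathrm{eucl}}$ on $\mathbb{R}^2\setminus\{c_1,\dots,c_n\}$: any curve in $\mathcal{C}_{i_j}^{x_j,x_{j+1}}$ with $i_j\neq 0$ must visit a fixed bounded neighbourhood of the centres to realise its winding, so its JM-length is at least $\sqrt{2h}\,(|x_j|+|x_{j+1}|)-C$ for a constant $C$ depending only on the centres. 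Hence $F(x_1,\dots,x_r)\to+\infty$ as $\max_j|x_j|\to\infty$, and the infimum is attained at some $(x_1^\star,\dots,x_r^\star)\in\elld^r$.

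For the second claim, denote by $\gamma$ the minimizing concatenation and suppose, towards a contradiction, that one of its arcs $\alpha:=\gamma_{i_j}^{x_j^\star,x_{j+1}^\star}$ exits $\Pi_\elld$ on a maximal open sub-interval $(t_1,t_2)$ with $\alpha(t_1),\alpha(t_2)\in\elld$. Being a JM-geodesic, $\alpha$ solves \eqref{eq:motion_equation}, and the Lagrange--Jacobi inequality yields $R(h)>0$ such that the moment $I(t)=|\alpha(t)-c^\star|^2/2$, with $c^\star$ the centroid of $\{c_i\}$, is strictly convex as long as $|\alpha(t)-c^\star|>R(h)$. Choosing $d_0(v,h)$ so large that $\mathbb{R}^2\setminus\Pi_\elld\subset\{|x-c^\star|>R(h)\}$ for every $d\ge d_0$, the function $I$ is strictly convex on the whole of $[t_1,t_2]$, which quantitatively bounds the depth of the detour into $\mathbb{R}^2\setminus\Pi_\elld$ in terms of $|\alpha(t_1)-\alpha(t_2)|$ and $d$.

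The contradiction is then produced by a shortening argument: the quantitative bound on the depth, together with the JM-concavity of $\elld$ (Lemma~\ref{lemma:convexity_mirror}), allows one to replace $\alpha|_{[t_1,t_2]}$ by a path inside $\overline{\Pi_\elld}$ consisting of two JM-geodesics joining $\alpha(t_1)$ and $\alpha(t_2)$ to a suitably chosen reflection point $z\in\elld$, and to check that this replacement strictly decreases the total JM-length. This is the step where I expect the main obstacle to lie: one has to estimate the length saved by exploiting the negative JM-curvature of $\elld$ against the extra cost of going through a region of larger potential, and to make the resulting inequality uniform in the sequence $(i_1,\dots,i_r)$, so that the threshold $d_0$ depends only on $v$ and $h$. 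Once the shortening is established, minimality of $\gamma$ is contradicted, no arc can exit $\Pi_\elld$, and the first-order conditions for $F$ at $(x_1^\star,\dots,x_r^\star)$ (equality of tangential components of consecutive velocities) automatically upgrade to the elastic reflection law, so $\gamma$ is a billiard trajectory.
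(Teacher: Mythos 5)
Your existence argument is correct but takes a slightly different route from the paper's: the paper confines the bouncing points to a fixed compact segment $s_d\subset\elld$ by applying the dilation variation of Lemma \ref{lemma:minimizers_in_a_ball} (the Lagrange--Jacobi inequality makes that variation length-decreasing outside a ball), whereas you obtain compactness from coercivity of the total length, using $g_{JM}\ge 2h\,g_{\mathrm{eucl}}$ together with the fact that winding arcs must visit a neighbourhood of the centres. Both work for existence, but the paper's version localizes the bounce points uniformly in $(i_1,\dots,i_r)$, which is what ultimately makes $d_0$ depend only on $(v,h)$ and is reused in the sequel; your coercivity bound depends a priori on the value of the infimum and hence on the sequence. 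Your derivation of the reflection law from the first-order conditions for $F$ matches the paper.

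The second half of your argument has a genuine gap, and I believe it is fatal as stated. Suppose an arc $\alpha=\gamma_{i_j}^{x,y}$ makes an excursion outside $\Pi_{\elld}$ on $[t_1,t_2]$ with $\alpha(t_1),\alpha(t_2)\in\elld$. That excursion lies in the half-plane containing no centres, so, rel endpoints, it is homotopic to the segment $[\alpha(t_1),\alpha(t_2)]\subset\elld$; and, being a sub-arc of the minimizer $\gamma_{i_j}^{x,y}$, it is itself the length minimizer among all paths from $\alpha(t_1)$ to $\alpha(t_2)$ in that (trivial) homotopy class. Hence no replacement path in $\overline{\Pi_{\elld}}$ with the same endpoints and the same winding can be strictly shorter, and a replacement with different winding takes the competitor out of $\mathcal{C}_{(i_1,\dots,i_r)}$. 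The concavity of $\elld$ established in Lemma \ref{lemma:convexity_mirror} actually works \emph{against} the shortening: geodesics emitted from $\elld$ away from the centres naturally bulge into $\Pi_{\elld}^c$, and such homotopically trivial outward excursions are perfectly consistent with Gauss--Bonnet. There is nothing to shorten; one must instead show that the excursion cannot occur at all. The paper does this non-variationally: by Lemma \ref{lemma:minimizers_in_a_ball} the velocities at the bounce points lie in the cones $K_x$, so the outgoing velocity after reflection lies in $K_x\cap R_v(-K_x)$, which points strictly into $\Pi_{\elld}$; Lemma \ref{lemma:convexity_mirror} then says that any solution arc with both endpoints on $\elld$ and initial velocity pointing into $\Pi_{\elld}$ is homotopically non-trivial; and Lemma \ref{lemma:compact_set} localizes all the winding inside a region $\Omega$ disjoint from $\elld$. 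Combining these, a putative interior crossing of $\elld$ forces the existence of a homotopically trivial sub-arc adjacent to an endpoint whose initial velocity nevertheless points into $\Pi_{\elld}$, a contradiction. You should replace your shortening step by this intersection/homotopy count.
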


A few remarks on the Theorem statement are in order. The existence of minimizers and the fact that they satisfy the elastic reflection law are fairly straightforward. The crucial point is to show that these minimizers lie entirely in the half-plane $\Pi_{\elld}$. This is because $\Pi_{\elld}$ is not convex with respect to the Jacobi-Maupertuis metric and the paths $\gamma_k^{x,y}$ need not lie completely in a single half-space for all choices of $x,y \in \elld$.

Let us consider now the space of sequences $\{k,m\}^\mathbb{Z}$.  The next Theorem states that we can associate to any sequence $\{i_j\}_{j \in \mathbb{Z}}$ a billiard trajectory obtained concatenating arcs winding $i_j-$times around the centres. Let $\xi:\mathbb{R}\to \mathbb{R}^2\setminus \{c_1,\dots, c_n\}$ be a billiard trajectory, assume that $\xi(0) \in \elld$ and denote by $\xi_i, i \in \mathbb{Z}$ the restriction of $\xi$ between two consecutive bounces. We have the following Theorem.
\begin{thm}
	\label{thm:non_periodic_orbits}
	For $d\ge d_0(v,h)$ as in Theorem \ref{thm:existence_periodic} and any sequence $\{i_j\}_{j \in \mathbb{Z}}\in \{k,m\}^\mathbb{Z}$ there exists a billiard trajectory $\xi$ such that its restriction between consecutive bouncing points $\xi_j$ belongs to $\mathcal{C}_{i_j}$.
\end{thm}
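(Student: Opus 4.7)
The plan is to obtain $\xi$ as a $C^1_{\mathrm{loc}}$ limit of periodic billiard trajectories produced by Theorem \ref{thm:existence_periodic}. For each $N \ge 1$ I would extract from the bi-infinite sequence $\{i_j\}$ the finite word $w_N = (i_{-N}, i_{-N+1}, \dots, i_N)$ and apply Theorem \ref{thm:existence_periodic} to its cyclic repetition. This yields a closed billiard trajectory $\xi^{(N)}$ with bouncing points $\{x^{(N)}_j\}_{j=-N}^{N+1} \subset \elld$, where $x^{(N)}_{N+1} = x^{(N)}_{-N}$, and arcs $\xi^{(N)}_j \in \mathcal{C}_{i_j}^{x^{(N)}_j, x^{(N)}_{j+1}}$ for $|j| \le N$. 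The goal is then to pass to the limit $N\to\infty$.

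The crucial step, and in my view the main obstacle, is a uniform $N$-independent bound on $|x^{(N)}_j|$ for each fixed $j$. Since $g_{JM} \ge \sqrt{2h}\, g_{\mathrm{Eucl}}$ on the whole plane, the JM length of any $\gamma_i^{x,y}$ is at least $\sqrt{2h}\,(|x|+|y|)$ up to an additive constant, while for $x,y$ in a fixed compact window $K\subset\elld$ one has a uniform upper bound $L(\gamma_i^{x,y}) \le L_i$ depending only on $i \in \{k,m\}$. Combining the two estimates, if some $|x^{(N)}_j|$ were very large then the two arcs incident to $x^{(N)}_j$ would contribute at least $2\sqrt{2h}\,|x^{(N)}_j|$ to the total length, whereas replacing the single bouncing point $x^{(N)}_j$ by a well-chosen $x_* \in K$ (keeping its neighbors fixed) would produce a competitor in the same cyclic class with strictly smaller total length, contradicting the minimality of the configuration. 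A shift of the index then shows that the compact set can be chosen independently of $j$.

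With such a uniform bound in hand, a standard diagonal extraction produces a subsequence, still denoted $\{\xi^{(N)}\}$, along which $x^{(N)}_j \to x^\infty_j \in \elld$ for every $j \in \mathbb{Z}$. I would then invoke Lemma \ref{lemma:uniqueness_length_minimizers}, together with a routine continuity argument (weak $H^1$ compactness of length-bounded sequences, lower semi-continuity of JM length, non-collision and preservation of the homotopy class in the limit, and uniqueness of the minimizer) to conclude that $\gamma_{i_j}^{x^{(N)}_j, x^{(N)}_{j+1}}$ converges in $C^1_{\mathrm{loc}}$ to $\gamma_{i_j}^{x^\infty_j, x^\infty_{j+1}}$. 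Concatenating these limiting arcs defines a curve $\xi$ whose $j$-th segment lies in $\mathcal{C}_{i_j}^{x^\infty_j, x^\infty_{j+1}} \subset \mathcal{C}_{i_j}$, as required.

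Finally I would check that $\xi$ is indeed a billiard trajectory. Each arc is a JM-geodesic, so after the standard reparametrization it solves \eqref{eq:motion_equation} with energy $h$. At each bouncing point $x^\infty_j$ the elastic reflection law follows by passing to the limit in the first-order optimality condition satisfied by $x^{(N)}_j$ for $|j|<N$: free-endpoint minimality of the length functional on $\elld$ forces the incoming and outgoing velocities at $x^{(N)}_j$ to be symmetric with respect to $v$, and this relation is closed under the $C^1$ convergence established above. Hence $\xi$ satisfies the reflection law at every bouncing point $x^\infty_j$, completing the construction.
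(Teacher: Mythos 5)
Your proposal is correct in outline and follows the same overall strategy as the paper (realize $\xi$ as a limit of the periodic trajectories of Theorem \ref{thm:existence_periodic} associated with the truncated words), but the two technical pillars are executed differently. First, the step you single out as ``the main obstacle'' --- a uniform, $N$-independent confinement of the bouncing points --- is already supplied by the paper: Lemma \ref{lemma:minimizers_in_a_ball} and the last item of Lemma \ref{lemma:compact_set} state that for \emph{every} finite word the bouncing points of periodic minimizers lie in the fixed segment $s_d = B_R(x_0)\cap \elld$, as a consequence of the Lagrange--Jacobi dilation variation $\delta_s$. Your alternative derivation by length comparison ($g_{JM}\ge \sqrt{2h}\,g_{\mathrm{Eucl}}$ plus a cheaper competitor obtained by relocating a far bouncing point) is workable, but as sketched it is slightly underspecified: if the neighbours $x^{(N)}_{j\pm1}$ also escape to infinity, moving $x^{(N)}_j$ alone need not shorten the configuration unless you perform the replacement at a point of maximal $|x^{(N)}_j|$ and control the JM length of the sliding segment along $\elld$ (where $V$ decays like $|x|^{-\alpha}$, $\alpha\ge1$, producing a possibly logarithmic correction); none of this is needed since the lemma is already available. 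Second, the limiting procedure is genuinely different: you extract convergence of the bouncing points and then invoke uniqueness (Lemma \ref{lemma:uniqueness_length_minimizers}) plus continuous dependence of $\gamma^{x,y}_{i}$ on its endpoints, whereas the paper extracts convergence of the curves themselves via an unfolding trick (reflecting $\eta_n$ across $\elld$ at each bounce to obtain $C^{1}$ curves with uniformly Lipschitz velocities) followed by Ascoli--Arzel\`a and a bootstrap between consecutive bounces. Your arc-by-arc route neatly sidesteps the loss of $C^2$ regularity at impacts that the unfolding is designed to handle, at the price of having to justify the continuity of $(x,y)\mapsto \gamma_i^{x,y}$ (routine, given uniqueness and the fact that $\bigcup_{x,y\in s_d}\gamma_i^{x,y}$ stays uniformly away from the centres) and to check that the limiting concatenation meets $\elld$ only at the $x^\infty_j$ --- a point the paper settles with the cone condition of Lemma \ref{lemma:minimizers_in_a_ball} and which your sketch should also record, since ``contained in $\Pi_{\elld}$'' alone is not enough to exclude interior tangencies in the limit.
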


In Section \ref{subsec:transversality} we give some compactness and transversality argument needed in the proof of Theorem \ref{thm:existence_periodic} and \ref{thm:non_periodic_orbits}. The proof of the Theorems are given in Section \ref{subsec:proof_thm_perturbative}. 
\subsection{Compactness and Lagrange-Jacobi inequality}
\label{subsec:transversality}
Now we investigate some qualitative features of the $\gamma_k^{x,y}$, for $k$  in $\mathbb{Z}^*$.
The main properties we aim to prove are the following. 
\begin{lemma}
	\label{lemma:compact_set}
	 There exists $d_0>0$ such that for all $d>d_0$ there exists a compact segment $s_d$ of $\elld$ and a compact region $\Omega$ enclosing the centres such that
	 \begin{itemize}
	 	\item  $\elld \cap \Omega = \emptyset$,
	 	\item for all $k$ and $x,y \in s_d$, $\gamma_k$ and $\gamma_k^{x,y}$ intersects $\partial \Omega$ exactly twice,
	 	\item for all $k$ and $x,y \in s_d$, the portion of $\gamma_k$ and $\gamma_k^{x,y}$ contained in $\Omega$ is homotopic, relatively to $\partial \Omega$, to a simple curve enclosing $c_1$ and $c_2$ run $k-$times,
	 	\item for all $(i_0,\dots,i_r)$ the bouncing points of periodic minimizers in $\mathcal{C}_{(i_0,\dots, i_r)}$ lie in $s_d$.
	 \end{itemize}
\end{lemma}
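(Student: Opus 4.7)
The plan is to use the Lagrange-Jacobi inequality (alluded to right after Theorem \ref{thm:main_perturbative}) to exhibit a ball outside of which $|x(t)|^2$ is strictly convex along solutions of \eqref{eq:motion_equation}, and to combine this convexity with the asymptotic flatness of $g_{JM}$ (where it behaves like $\sqrt{2h}$ times the Euclidean metric) plus Lemma \ref{lemma:uniqueness_length_minimizers}. The candidate for $\Omega$ will be a large closed Euclidean ball, and $s_d$ a long segment of $\elld$ centred around its closest point to the origin.

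\textbf{Step 1: Lagrange-Jacobi and the definition of $\Omega$ and $d_0$.} For any solution $x$ of \eqref{eq:motion_equation} at energy $h$, I compute
\begin{equation}
    \label{eq:lagrange_jacobi_plan}
    \frac{d^2}{dt^2}\frac{|x|^2}{2}=|\dot x|^2+x\cdot \nabla V(x)=2(h+V(x))+x\cdot\nabla V(x).
\end{equation}
Since $V(x)\to 0$ and $x\cdot\nabla V(x)=-\sum_i\alpha_i m_i\,\frac{x\cdot(x-c_i)}{|x-c_i|^{\alpha_i+2}}\to 0$ as $|x|\to\infty$, there exists $R_0>0$ such that the right-hand side of \eqref{eq:lagrange_jacobi_plan} exceeds $h>0$ whenever $|x|\geq R_0$. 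Enlarging $R_0$ if necessary I further assume $\{c_1,\dots,c_n\}\subset B(0,R_0)$, and I set $\Omega=\overline{B(0,R_0)}$. Choosing $d_0$ slightly larger than $R_0$ then guarantees $\elld\cap\Omega=\emptyset$ for $d\geq d_0$, which is the first bullet.

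\textbf{Step 2: entry/exit count and homotopy.} By Lemma \ref{lemma:uniqueness_length_minimizers}, each $\gamma_k^{x,y}$ is a collision-less $g_{JM}$-geodesic, hence (up to reparametrization) a solution of \eqref{eq:motion_equation} at energy $h$. Pick a parametrization on $[0,T]$ with endpoints on $\elld\subset\Omega^c$. Strict convexity of $I(t)=|x(t)|^2/2$ on every connected sub-interval of $\{t:x(t)\notin\Omega\}$ rules out the existence of two distinct interior zeros of $I(t)-R_0^2/2$ on such a sub-interval, because the linear interpolation would force $I\leq R_0^2/2$ in between. This means that once the trajectory leaves $\Omega$ it cannot re-enter, so $\gamma_k^{x,y}$ crosses $\partial\Omega$ \emph{at most} twice; since it winds $k\neq 0$ times around $c_1,c_2$, it must enter $\Omega$, giving exactly two crossings (second bullet). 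The ``outer'' arcs lie in the simply connected set $\Pi_\elld\setminus\Omega$ and are therefore homotopically trivial relative to $\partial\Omega\cup\elld$; all the winding is carried by the arc inside $\Omega$, proving the third bullet. The same argument works for $\gamma_k$ with free endpoints.

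\textbf{Step 3: compactness of the bouncing points (the hard part).} Here I need a uniform bound, independent of $r$ and of $(i_0,\dots,i_r)$, on $|x_j|$ for the bouncing points of a periodic minimizer in $\mathcal{C}_{(i_0,\dots,i_r)}$. My plan is a local variational comparison: by minimality, $x_j$ minimizes $y\mapsto L_{JM}(\gamma_{i_{j-1}}^{x_{j-1},y})+L_{JM}(\gamma_{i_j}^{y,x_{j+1}})$ along $\elld$, which enforces the elastic reflection law. Since each $\gamma_{i_j}^{x_j,x_{j+1}}$ must enter $\Omega$ by Step 2 and $V\geq 0$, the bound $g_{JM}\geq 2h\,g$ gives
\begin{equation*}
    L_{JM}(\gamma_{i_j}^{x_j,x_{j+1}})\geq \sqrt{2h}\bigl((|x_j|-R_0)+(|x_{j+1}|-R_0)\bigr).
\end{equation*}
A matching upper bound is obtained by replacing $x_j$ with a fixed reference point $p_0\in\elld\cap\overline{B(0,R_0+1)}$ and concatenating straight segments in $\Omega^c$ with a fixed winding arc inside $\Omega$; outside $\Omega$ one has $V\to 0$, so the straight-segment contribution is asymptotically $\sqrt{2h}\,|x_{j\pm1}-p_0|$, giving a coefficient against $|x_{j\pm 1}|$ strictly less than what the lower bound charges against $|x_j|$ once $R_0$ is taken large enough. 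This coercivity, combined with the elastic reflection law at $x_j$ (which forces the incoming and outgoing velocities at a distant $x_j$ to be almost tangent to $\elld$, preventing the trajectory from both coming from and returning to $\Omega$), yields a uniform bound $|x_j|\leq C(v,h)$ and hence the desired $s_d$. The main obstacle is precisely this last quantitative coercivity estimate, and in particular the need for the upper-bound coefficient to beat $\sqrt{2h}$ uniformly in $(i_0,\dots,i_r)$; I expect this to be the place where the ``$d$ large, or $h$ large'' hypothesis of Theorem \ref{thm:main_perturbative} is genuinely used.
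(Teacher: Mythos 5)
There are two genuine gaps, one in your Step 2 and one (which you yourself flag) in Step 3.

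\textbf{Step 2.} With $\Omega=\overline{B(0,R_0)}$ a round ball, the claim that a minimizer in $\mathcal{C}_k^{x,y}$ ``must enter $\Omega$'' does not follow from topology alone: in the two--centre case a path from $x$ to $y$ that winds $k$ times around the \emph{whole ball} while staying outside it closes up with $[y,x]$ to a loop homotopic to $\sigma^k$, so it is an admissible competitor that never meets $\Omega$. Your two supporting claims are also false as stated: $\Pi_{\elld}\setminus\Omega$ is a half-plane minus a disk in its interior, hence not simply connected, and the minimizers $\gamma_k^{x,y}$ need not lie in $\Pi_{\elld}$ at all (the paper warns about precisely this just before Lemma \ref{lemma:convexity_mirror}). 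Consequently the third bullet (``all the winding is carried by the inner arc'') is not justified either. The paper avoids this by first confining \emph{every} minimizer --- including the periodic ones --- to a ball $B_R(x_0)$ centred at the orthogonal projection $x_0$ of the barycentre onto $\elld$ (Lemma \ref{lemma:minimizers_in_a_ball}), and then taking $\partial\Omega$ to be a length-minimizing loop $\gamma_r$ homotopic to $\partial B_r$ whose basepoint lies \emph{outside} $B_R(x_0)$: then $B_R\setminus\Omega_r$ is contractible, so entering $\Omega_r$ is forced, and minimality of $\gamma_r$ gives both the bound on the number of intersections and the homotopy class of the inner arc. Your convexity-of-$\vert x\vert^2$ argument is a perfectly good substitute for the ``at most two crossings'' half of the second bullet, but the ``at least two'' half needs either the paper's construction or an explicit length comparison (winding $k$ times outside $B(0,R_0)$ costs at least $2\pi \vert k\vert R_0\sqrt{2h}$ in Jacobi--Maupertuis length, versus a competitor winding tightly around $[c_1,c_2]$, which forces $R_0$ to be enlarged accordingly); you supply neither.

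\textbf{Step 3.} The fourth bullet is only sketched, and the coercivity scheme as set up does not obviously close: the upper bound charges $(1+\ve)\sqrt{2h}\,\vert x_{j\pm1}\vert$ against neighbouring bouncing points which may themselves be far out, so at the very least one must run the comparison at a maximal bouncing point, and the additive constants coming from the winding arcs must be controlled uniformly in $k$. The paper obtains this bullet --- and in fact the confinement used in Step 2 --- in one stroke: the radial dilation $\delta_s$ centred at $x_0\in\elld$ maps $\elld$ into itself, hence is an admissible variation for periodic minimizers with free bouncing points on the wall, and the Lagrange--Jacobi inequality makes its first variation of the Jacobi--Maupertuis length strictly negative unless the whole curve already lies in $B_R(x_0)$. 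Replacing your Step 3 by this dilation argument, and your choice of $\Omega$ by the paper's $\Omega_r\cap\{\langle w,x\rangle\le R_0\}$, is the most economical repair; your Lagrange--Jacobi computation itself is correct and is the same inequality the paper uses in Remark \ref{remark:ball_half_plane}.
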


Loosely speaking, this means that the $\gamma_k^{x,y}$ with initial conditions in $s_d$ join $\elld$ to $\partial \Omega$, wind $k-$times around $c_1$ and $c_2$ inside $\Omega$ and then leave $\Omega$ to reach $\elld$ again. 
The proof of Lemma \ref{lemma:compact_set} relies on the following compactness property of length-minimizers.

\begin{figure}
	\centering
	\resizebox{\textwidth}{!}{
	\input{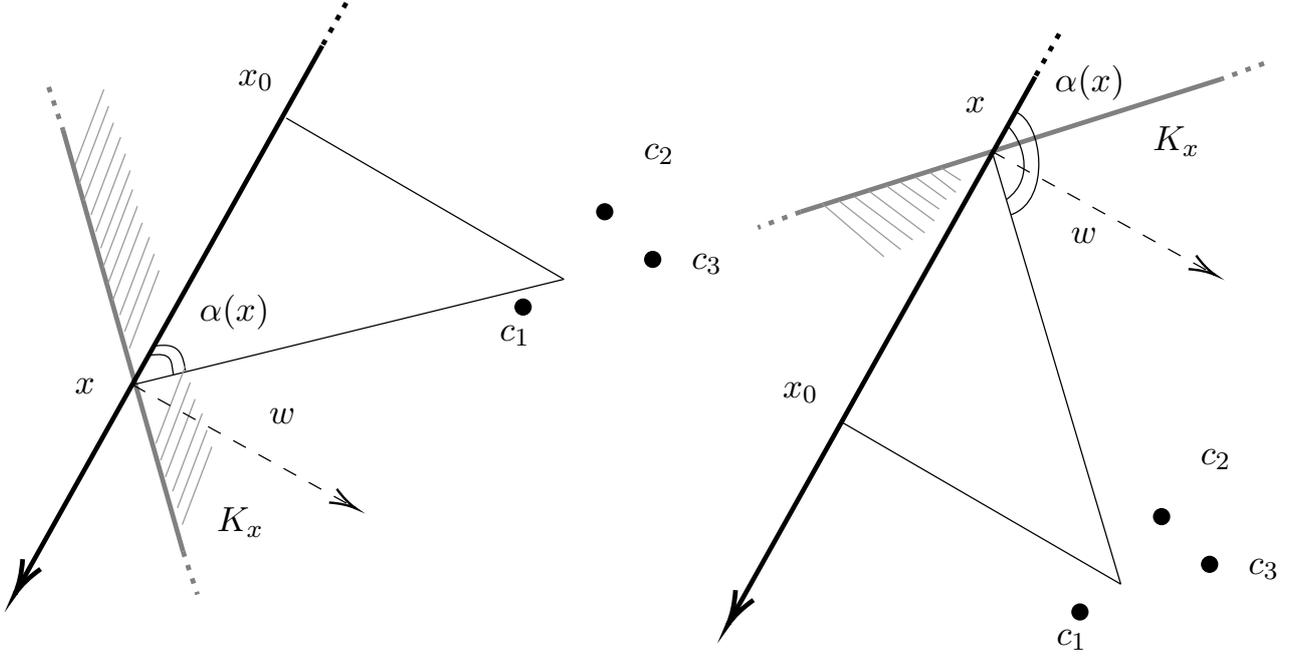}}
	\caption{The cones $K_x$ appearing in Lemma \ref{lemma:minimizers_in_a_ball} }
	\label{fig:cones}
\end{figure}

\begin{lemma}
	\label{lemma:minimizers_in_a_ball}
	Let us denote by $x_0$ the orthogonal projection of the origin to $\elld$. There exists a $R = R(d)>0$ such that, for all $k$, the $\gamma_k$ and $\gamma_k^{x,y}$ with $x,y \in B_R(x_0)$ as well as minimizers in $\mathcal{C}_{(i_1,\dots, i_r)}$ for any $(i_1,\dots,i_r)$ are completely contained in $B_R(x_0)$. 
	
	Moreover, fixing the origin at the barycentre of the $c_i$'s, we have that the angle between the initial velocities of minimizers $\dot\gamma_k^{x,y}(0)$  and the normal $w$ of $\ell_d$ always belong to the $\pi$-cone given by 
	\[
	K_x = \left(\alpha(x), \pi -\alpha(x)\right), \quad \alpha(x) =\begin{cases}
		\pi -\arcsin(\frac{\vert x_0\vert}{\vert x \vert}), &x>x_0\\
		\arcsin(\frac{\vert x_0\vert}{\vert x \vert}), &x\le x_0.		
	\end{cases}
	\]
	\begin{proof}
		Let us change coordinates and fix the origin at the point $x_0$. Let $R>0$ and let us consider the family of dilations $\delta_s(x) = e^{-s \psi(x)} x$, where $\psi(x) = \psi(\vert x\vert)$ is a radial monotone function, vanishing inside $B_R$ and equal to $1$ outside $B_{2R}$. Let $\gamma$ be a length minimizer and consider the family of curves $\delta_s(\gamma)$. The derivatives with respect to $t$ and $s$ respectively read
		\[
		\frac{d}{dt}\delta_s(\gamma) = -s \langle \nabla \psi, \dot{\gamma}\rangle \delta_s(\gamma)+\delta_s(\dot{\gamma}), \quad \frac{d^2}{dt ds}\delta_s(\gamma)\vert_{s=0} =  -\langle \nabla \psi, \dot{\gamma}\rangle \gamma - \psi  \dot\gamma.
		\]
		Similarly, the derivative with respect to $s$ at $s=0$ is
		\[
		\frac{d}{d s} \delta_s(\gamma) \vert_{s=0}= -\psi \gamma.
		\]
		Thus, the first variation of the length reads:
		\begin{align*}
			\frac{d \ell(\delta_s(\gamma)}{ds}\vert_{s=0} &= \int - \langle \langle \nabla \psi, \dot{\gamma}\rangle \gamma + \psi  \dot\gamma, \dot{\gamma}\rangle \frac{\sqrt{h+V}}{\vert\dot{\gamma}\vert}-\frac{\vert\dot{\gamma}\vert\langle \nabla V,\psi \gamma \rangle }{2\sqrt{h+V}} \\
			&= \int-\psi'(\vert \gamma \vert)\left(\frac{\langle\dot{\gamma},\gamma \rangle} {\vert \dot\gamma \vert}\right)^2\sqrt{h+V}-\frac{ \psi\vert\dot{\gamma}\vert}{\sqrt{h+V }} (h+V+\frac12\langle\nabla V,\gamma\rangle).
		\end{align*}
		If $R$ is big enough, the quantity $(h+V+\frac12\langle \nabla V,\gamma\rangle)$ is positive outside the ball of radius $R$ (see Remark \ref{remark:ball_half_plane}). This inequality is customarily called \emph{Lagrange-Jacobi inequality}.  So, if $\gamma$ were not completely contained in $B_R$, the derivative above would be strictly negative and $\gamma$ would not be a length-minimizer. This variation is admissible whenever $\gamma$ is an arc with endpoints in $B_R(x_0)$, a minimizer with free endpoints in $\elld$ or a periodic billiard trajectory.
		
		The second part of the statement follows again from the inequality $h+V+\frac{1}{2}\langle \nabla V, x\rangle \ge0$. Pick as origin the barycentre of the $c_i$s. Outside a ball of radius $R_0$ (independent of $d$) the Lagrange-Jacobi inequality holds. Thus, along solutions of \eqref{eq:motion_equation}, the distance from the origin is increasing provided that the initial velocity of solutions of \eqref{eq:motion_equation} satisfies $\langle \gamma,\dot{\gamma}\rangle \ge0$. Thus, initial conditions in the cones specified above correspond to solutions going to spatial infinity which never meet $\elld$ again.
	\end{proof}
\end{lemma}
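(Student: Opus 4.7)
The plan is to exploit the \emph{Lagrange-Jacobi inequality}: for the potentials in \eqref{eq:potential} with $\alpha_i \ge 1$, the quantity $h + V(x) + \tfrac12 \langle \nabla V(x), x \rangle$ is positive outside a ball $B_{R_0}$ (centred appropriately), with $R_0$ depending only on $h$, the masses and the exponents. This single inequality plays a dual role: it controls the first variation of the Jacobi-Maupertuis length under radial compressions, and it controls the sign of $\tfrac{d^2}{dt^2}|x(t)|^2$ along solutions of \eqref{eq:motion_equation}, which are the two ingredients I would feed into the two halves of the statement.

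For the confinement claim, I would deform any candidate minimizer $\gamma$ by a one-parameter family of dilations $\delta_s(x) = e^{-s\psi(|x|)} x$, with $\psi$ a smooth radial cut-off that vanishes on $B_R$ and equals $1$ outside $B_{2R}$, where $R$ is chosen large enough that $B_R(x_0) \supset B_{R_0}$. Because $\delta_s$ fixes $B_R(x_0)$ pointwise, the deformation is admissible for every class appearing in the statement: arcs with fixed endpoints in $B_R(x_0)\cap \elld$, arcs with free endpoints on $\elld \cap B_R$, and closed concatenations in $\mathcal{C}_{(i_1,\dots,i_r)}$ whose bouncing points lie in $B_R$. A direct computation of $\frac{d}{ds}\big|_{s=0}\ell_{JM}(\delta_s \circ \gamma)$ splits into a term proportional to $-\psi'$ (non-positive by monotonicity of $\psi$) and a term proportional to $-\psi\bigl(h+V+\tfrac12\langle \nabla V,\gamma\rangle\bigr)$ (non-positive by Lagrange-Jacobi), with strict inequality as soon as $\gamma$ exits $B_R$. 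Minimality then forces $\gamma \subset B_R$.

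For the cone part I would switch viewpoint and place the origin at the barycentre of the $c_i$'s, then argue dynamically rather than variationally. Energy conservation and the Lagrange-Jacobi inequality yield $\tfrac{d^2}{dt^2}|x(t)|^2 \ge 0$ on $\{|x| \ge R_0\}$, so that any trajectory which at some instant satisfies $|x| \ge R_0$ and $\langle x, \dot x \rangle \ge 0$ has non-decreasing distance from the origin thereafter and escapes to infinity without ever returning to $\elld$. A solution starting from $x \in \elld$ therefore cannot be a returning minimizer as soon as its initial velocity lies in the half-space $\{\langle u, x\rangle \ge 0\} \cap \{\langle u, w\rangle < 0\}$. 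Parametrising the admissible velocities by the angle with $w$ and using that the orthogonal projection $x_0$ of the origin to $\elld$ satisfies $\sin(\text{angle}) = |x_0|/|x|$, one reads off the complement of $K_x$ exactly as stated.

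The main obstacle I anticipate is making the radius $R$ and the threshold $R_0$ uniform across all classes $\mathcal{C}_k^{x,y}$ and all words $(i_1,\dots,i_r)$: a priori these are different variational problems, and the minimizers could in principle drift out together with $k$. This is resolved by the fact that $R_0$ depends only on $h$, the $m_i$ and the $\alpha_i$ (and not on the topological class), combined with the uniqueness and collision-avoidance already established in Lemma \ref{lemma:uniqueness_length_minimizers}, which rule out minimizers that escape for purely homotopical reasons.
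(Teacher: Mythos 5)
Your proposal follows essentially the same route as the paper: the radial dilation $\delta_s(x)=e^{-s\psi}x$ with a cut-off vanishing on $B_R$, the splitting of the first variation of the Jacobi--Maupertuis length into a $-\psi'$ term and a $-\psi\bigl(h+V+\tfrac12\langle\nabla V,\gamma\rangle\bigr)$ term controlled by the Lagrange--Jacobi inequality, and the dynamical escape argument (convexity of $t\mapsto\vert x(t)\vert^2$ outside $B_{R_0}$ with barycentric origin) for the cone condition. The argument is correct and matches the paper's proof in all essentials, including the observation that $R_0$ is independent of the topological class, so no further comparison is needed.
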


\begin{rem}[Lagrange-Jacobi inequality]
	\label{remark:ball_half_plane}
	First of all let us observe that 
	\begin{align}
		\nonumber
		h+V+\frac12\langle \nabla V,x\rangle &= h +\sum_{i=1}^{n}\frac{m_i}{\vert x-c_i\vert^{\alpha_i}}-\frac{m_i \alpha_i\langle x,x-c_i\rangle}{2\vert x-c_i\vert^{\alpha_i+2}}\\
		&\ge h +\sum_{i=1}^{n}\frac{m_i}{\vert x-c_i\vert^{\alpha_i}}\left(1-\frac{\alpha_i\vert x\vert}{2\vert x-c_i\vert}\right).
		\label{eq:lagrange_jacobi}
	\end{align}
	If $\alpha_i<2$ for all the $i$, $2\vert x-c_i\vert\ge \alpha_i \vert x\vert$ outside of a circle of radius $\frac{2}{2-\alpha_i} \vert c_i\vert$. Thus the quantity $h+V+\frac12\langle \nabla V,\gamma\rangle$ is positive outside a ball of radius $R_0 = \max_{i=1,\dots,n} \frac{2\vert c_i\vert}{2-\alpha_i}$, independently on the energy. If some of the $\alpha_i\ge 2$ the second term in the equation above is always negative but tends to zero at infinity.
	
	Moreover, fixing the origin at the barycentre of the $c_i$, one obtains that the ball $B_R$ of the previous Lemma can be chosen to lie in the half plane $\{x:\langle x, w\rangle \le R_0 \}$ which is independent of the distance between the line $\elld$ and the centres, see Figure \ref{fig:omega}.
\end{rem}

   \begin{figure}
   	\centering
   	\tikzset{every picture/.style={line width=0.75pt}} 

\begin{tikzpicture}[x=0.75pt,y=0.75pt,yscale=-1,xscale=1]
	
	\draw [line width=1.5]  [dash pattern={on 1.69pt off 2.76pt}]  (83,57) -- (105,79) ;
	\draw [line width=1.5]  [dash pattern={on 1.69pt off 2.76pt}]  (321,295) -- (334.17,308.17) -- (343,317) ;
	\draw  [fill={rgb, 255:red, 0; green, 0; blue, 0 }  ,fill opacity=1 ] (267,97) .. controls (267,95.9) and (267.9,95) .. (269,95) .. controls (270.1,95) and (271,95.9) .. (271,97) .. controls (271,98.1) and (270.1,99) .. (269,99) .. controls (267.9,99) and (267,98.1) .. (267,97) -- cycle ;
	\draw  [fill={rgb, 255:red, 0; green, 0; blue, 0 }  ,fill opacity=1 ] (261,141) .. controls (261,139.9) and (261.9,139) .. (263,139) .. controls (264.1,139) and (265,139.9) .. (265,141) .. controls (265,142.1) and (264.1,143) .. (263,143) .. controls (261.9,143) and (261,142.1) .. (261,141) -- cycle ;
	\draw [color={rgb, 255:red, 155; green, 155; blue, 155 }  ,draw opacity=1 ]   (372.33,8) -- (83.33,297) ;
	\draw  [color={rgb, 255:red, 155; green, 155; blue, 155 }  ,draw opacity=1 ][line width=1.5]  (112.91,89.79) .. controls (165.71,38.59) and (250.02,39.89) .. (301.23,92.69) .. controls (352.43,145.49) and (351.13,229.8) .. (298.33,281) .. controls (245.53,332.2) and (161.22,330.91) .. (110.02,278.11) .. controls (58.82,225.31) and (60.11,141) .. (112.91,89.79) -- cycle ;
	\draw  [color={rgb, 255:red, 155; green, 155; blue, 155 }  ,draw opacity=1 ][dash pattern={on 4.5pt off 4.5pt}] (220.33,120) .. controls (220.33,95.7) and (240.03,76) .. (264.33,76) .. controls (288.63,76) and (308.33,95.7) .. (308.33,120) .. controls (308.33,144.3) and (288.63,164) .. (264.33,164) .. controls (240.03,164) and (220.33,144.3) .. (220.33,120) -- cycle ;
	\draw [color={rgb, 255:red, 155; green, 155; blue, 155 }  ,draw opacity=1 ] [dash pattern={on 4.5pt off 4.5pt}]  (264.33,120) -- (308.33,120) ;
	\draw [line width=1.5]    (340,40) .. controls (345,36.25) and (316.33,159) .. (284.33,179) .. controls (252.33,199) and (234.33,175) .. (227.83,152.5) .. controls (221.33,130) and (229.33,115) .. (237.33,103) .. controls (245.33,91) and (262.33,80) .. (340,40) -- cycle ;
	\draw [color={rgb, 255:red, 155; green, 155; blue, 155 }  ,draw opacity=1 ] [dash pattern={on 4.5pt off 4.5pt}]  (203.33,177) -- (258,308) ;
	\draw [line width=1.5]    (105,79) -- (321,295) ;
	\draw [color={rgb, 255:red, 155; green, 155; blue, 155 }  ,draw opacity=1 ]   (437.33,227) -- (215.33,8) ;
	
	\draw (281,99) node [anchor=north west][inner sep=0.75pt]   [align=left] {$\displaystyle R_{0}$};
	\draw (247.33,90) node [anchor=north west][inner sep=0.75pt]   [align=left] {$\displaystyle c_{1}$};
	\draw (272.33,132) node [anchor=north west][inner sep=0.75pt]   [align=left] {$\displaystyle c_{2}$};
	\draw (337,66) node [anchor=north west][inner sep=0.75pt]   [align=left] {$\displaystyle \gamma _{r}$};
	\draw (216,254) node [anchor=north west][inner sep=0.75pt]   [align=left] {$\displaystyle R$};
	\draw (137,85.4) node [anchor=north west][inner sep=0.75pt]    {$\ell _{v,d}$};

\end{tikzpicture}
   	\caption{The region $\Omega$ is obtained as the intersection of the interior of $\gamma_r$ and the half plane supporting $B_R$.}
   	\label{fig:omega}
   \end{figure}
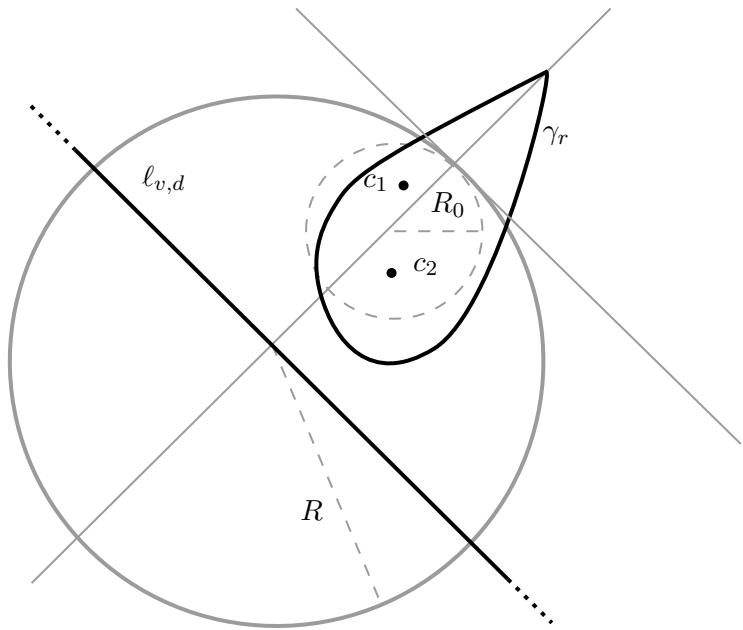

 \begin{proof}[Proof of Lemma \ref{lemma:compact_set}]
 	Let us pick as origin the barycentre of the $c_i$s and let $r>0$. Consider $B_r(0)$ the ball of radius $r$ centred at the origin. Let us consider the (unique up to reparametrization) curve $\gamma_r : [0,1] \to \mathbb{R}^2\setminus\{c_1,\dots,c_n\}$ which is homotopic to $\partial B_r$, satisfies
 	\[
 	\gamma_r(0) = \gamma_r(1) = r w
 	\]
 	and is length-minimizing. Denote by $\Omega_r $ the region bound by $\gamma_r$. Clearly,  $\Omega_r \subseteq \Omega_{r'}$ if $r \le r'$ since length minimizer cannot intersect.
 	
    Thanks to Lemma \ref{lemma:minimizers_in_a_ball} and Remark \ref{remark:ball_half_plane}, we can take $r>R_0$ so that the endpoints of $\gamma_r$ lie outside of $B_R(dw)$. Here $R$ is the one given in Lemma \ref{lemma:minimizers_in_a_ball}. Moreover, for a fixed choice of $r$ there exists $d_0$ such that for all $d> d_0$, $\elld$ does not intersect $\Omega_r$. On the other hand, $B_R\setminus \Omega_r$ is contractible and thus every $\gamma_k$ and $\gamma_k^{x,y}$ has to enter (and exit) $\Omega_r$. Thanks to minimality, the intersection with $\partial \Omega_r$ are at most two. For the same reason, the closed curve obtained as concatenation of the portion of $\gamma_k^{x,y}$ inside $\Omega$ with an arc of $\partial \Omega$ must be homotopic to a simple curve enclosing $c_1$ and $c_2$ run $k-$times. Defining 
    \[
    \Omega := \{x: \langle w, x\rangle \le R_0\} \cap \Omega_r \supset B_R\cap \Omega_r
\] we have the desired compact set and $s_d$ can be taken as $B_R\cap \elld$.
 \end{proof}

\subsection{Proof of Theorem \ref{thm:existence_periodic} and \ref{thm:non_periodic_orbits}}
\label{subsec:proof_thm_perturbative}
We are now in a position to prove Theorem \ref{thm:existence_periodic} and \ref{thm:non_periodic_orbits}.
\begin{proof}[Proof of Theorem \ref{thm:existence_periodic}]
	We divide the proof in two steps. First, we prove existence of minimizers in $\mathcal{C}_{(i_1,\dots,i_r)}$ (as defined in  \eqref{eq:def_space_paths}) and show that they satisfy the bouncing condition. Then, we show that they are completely contained in a half plane and thus billiard trajectories.
	
	\textbf{Step 1.} Existence of minimizers in $\mathcal{C}_{(i_1,\dots,i_r)}$ is straightforward. Indeed, the bouncing points (i.e. endpoints of the $\gamma_k^{x,y}$ and $\gamma_m^{x,y}$) must lie in a compact segment $s_d$ of $\elld$ thanks to Lemma \ref{lemma:minimizers_in_a_ball} (otherwise the same variation used there decreases length). Thus, we are minimizing on a compact set diffeomorphic to a closed $r-$dimensional cube and minimizers always exists.
	
	To see that the elastic reflection condition holds, we can argue as follows.  Let us consider a critical point of the JM length in  $\mathcal{C}_{(i_1,\dots,i_r)}$, assume that the point $x\in \elld$ is a bouncing point and consider the portion of the minimizer given by
	\[
	\gamma_{i_1}^{y,x}*\gamma_{i_2}^{x,z}, \quad y,z \in \elld \,\, i_j \in \{k,m\}.
	\]
	For simplicity set $T_1 = T_{i_1}^{y,x}$ and $T_2 = T_{i_2}^{x,z}$ and consider
	\[
	(\xi_1,\xi_2) = (	\gamma_{i_1}^{y,x}(t),\gamma_{i_2}^{x,z}(T_2-t)) \]
	which is an element of the space
	\[	    \mathcal{C}_{y,z}=\{(\eta_1,\eta_2): \eta_i \in H^1[0,T_i], \eta_1(0) = y, \eta_2(0)=z, \eta_1(T_1) = \eta_2(T_2), \eta_j\sim \gamma_{i_j}\}.
	\]
	Differentiating the length functional at $(\xi_1,\xi_2)$ in $\mathcal{C}_{y,z}$ one finds that
	\[
	\langle \dot\xi_1(T_1)+\dot\xi(T_2),v\rangle = \langle \dot\gamma_{i_1}^{y,x}(T_1)-\dot \gamma_{i_2}^{x,z}(0)) ,v\rangle = 0.  
	\]
	Since the line
	\[
	\{\gamma_{i_1}^{y,w}*\gamma_{i_2}^{w,z}, \, w\in \elld\}
	\]
	can be embed into $\mathcal{C}_{y,z}$ without changing the length, the claim follows.
	
	\textbf{Step 2. } 	We show now that, at bouncing points, the velocities of the minimizers point inwards the half-plane $\Pi_{\elld}$. This is a consequence of Lemma \ref{lemma:compact_set}. In fact, assume as before that  $x\in \elld$ is a bouncing point and consider the portion of minimizer given by
	\[
	\gamma_{i_1}^{y,x}*\gamma_{i_2}^{x,z}, \quad y,z \in \elld \,\, i_j \in \{k,m\}.
	\]
	By Lemma \ref{lemma:compact_set}, the initial velocity $\gamma_{i_2}^{x,z}$ belongs to the cone $K_x$ and the final velocity of $\gamma_{i_1}^{y,x}$ to $-K_x$ since neither of them escapes to infinity. Let us denote by $R_{v}$ the reflection induced by the billiard law. It follows that the initial velocity of $\gamma_{i_2}^{x,z}$ must lie in $K_x\cap R_{v}(-K_x)$. This cone is easily seen to be completely contained in the half-plane $\Pi_{\elld}$. 
	
	We have proved that for short times the arcs of a minimizer $\gamma$ lie in the region containing the centres. Let us fix an arc $\gamma_j^{x,y}$ and assume that it intersects $\elld$ in a point $z \ne x,y$. Without loss of generality let us assume that this is the first one. Thanks to Lemma \ref{lemma:convexity_mirror}, we know that the portion of $\gamma_j^{x,y}$ joining $x$ to $z$, which we will call $\eta$, cannot be homotopically trivial. Thanks to Lemma \ref{lemma:compact_set}, $\eta$ winds $j-$times around the centres inside $\Omega$ and then intersects the line $\elld$. Since the final velocity at $y$ must lie in $-K_y\cap R_{v}(K_y)$, there should be a fourth intersection point and a homotopically trivial segment joining it to $y$, contradicting Lemma \ref{lemma:convexity_mirror}.	
	Thus, we have proved that minimizers $\gamma$ are completely contained in the half-space with the centres and so billiard trajectories.
\end{proof}

\begin{proof}[Proof of Theorem \ref{thm:non_periodic_orbits}]
   We will build the desired obits as limits of periodic ones. First we need the following observation. As already noticed, for any finite sequence $(i_1,\dots,i_r)$ the bouncing point of minimizers in $\mathcal{C}_{(i_1,\dots,i_r)}$ lie in a common compact interval $s_d\subset\elld$, thanks to Lemma \ref{lemma:minimizers_in_a_ball}. Since the restriction of the flow is continuous and the time between consecutive bounces bounded, the sets $\cup_{x,y \in s_d}\gamma_k^{x,y}$ and $\cup_{x,y \in s_d}\gamma_m^{x,y}$ are closed in $\mathbb{R} \setminus \{c_1,\dots, c_n\}$. Thus, the minimizers never approach collisions. 
   
   Parametrize the $\gamma_{i_j}^{x,y}$ as solutions of \eqref{eq:motion_equation}. The energy equation
   \[
   h = \frac{1}{2}\vert \dot\gamma_{i_j}^{x,y}\vert - V(\gamma_{i_j}^{x,y})
   \]
   implies that the $\gamma_{i_j}^{x,y}$ are pre-compact in the $C^0$ topology. Let us consider a sequence $\{i_j\}_{j \in \mathbb Z}$ and for $n \in \mathbb{N}$ the finite subsequences $(i_{-n},i_{-n+1}, \dots, \dots, i_n)$. Let us denote by $\eta_n$ a minimizer in $\mathcal{C}_{(i_{-n},i_{-n+1}, \dots, \dots, i_n)}$ and let us assume that it is extended by periodicity to a curve on the whole real line. This would be enough to prove convergence in the $C^0[-T,T]$ topology for any $T$. However, since the curve $\eta_n$ are just piecewise $C^2$ we cannot apply the usual bootstrap argument. We can however resort to the following unfolding procedure. Let $R_{\elld}$ the reflection about $\elld$. We consider the $C^1$ curves $\tilde\eta_n$ obtained reflecting the $\eta_n$ after each intersection with the wall $\elld$. Notice that, thanks to the motion equation \eqref{eq:motion_equation}, the first derivatives are now Lipschitz function, with an uniform Lipschitz constant.
   Thus, we can apply Ascoli-Arzelà Theorem and produce, for any interval $[-T,T]$ a subsequence converging in the $C^1$ topology to a curve $\eta$.  Using a standard diagonal argument we can find a subsequence converging uniformly to a curve $\eta$ on every compact interval.  Now, thanks to Lemma \ref{lemma:minimizers_in_a_ball} and the observation about the admissible cones in the proof of Theorem \ref{thm:existence_periodic}, we know that the velocities $\dot{\eta}_n$ lie in a cone not containing the direction $v$ and thus are never tangent to $\elld$, with possibly the exception of one point of $\elld$. Thus the set $\eta^{-1}(\elld)$ is a discrete set (recall that the convergence is $C^1$). For consecutive $a, b \in \eta^{-1}(\elld)$ we can therefore show, by the usual bootstrap argument, that the convergence is actually in the $C^2$ topology on compacts $[c,d]\subset (a,b)$ and thus the limit $\eta$ is piecewise a solution of \eqref{eq:motion_equation}.
\end{proof}

\section{The $2-$centres case}
\label{section:2_centres}
In this section, we focus on the problem with two Newtonian centres. It is well known that this system is integrable in elliptic-hyperbolic coordinates, as recalled in below and outlined in the Appendix \ref{appendix:2_centres}. Using the additional information about the system, we go further the perturbative setting of Theorem \ref{thm:existence_periodic} and \ref{thm:non_periodic_orbits}. 

\subsection{The curves $s_{\pm\infty}$}
\label{subsec:curve_infinite_turns}
Recall that the  Hamiltonian of the $2-$centres problem is given by
\[
    H (p,x) = \frac{1}{2}\vert p \vert^2+\frac{m_1}{\vert x-c_1\vert}+\frac{m_2}{\vert x-c_2\vert}.
\]
Choosing as origin the midpoint of $[c_1,c_2]$, and after a rotation and scaling, we can assume that $c_1 = -c_2 = (1,0)$.  We set (compare with Appendix \ref{appendix:2_centres})
\[
x = \Phi(\xi,\eta) = (\cosh \xi \cos \eta, \sinh \xi \sin \eta)
\]
and extend the transformation to the cotangent bundle accordingly. In these new coordinates, the conserved quantity of the system reads
\begin{align*}
      I &= \frac{1}{2} \dot{\xi}^2 -(m_1+m_2)\cosh \xi-H \cosh^2 \xi \\&= - \frac12 \dot{\eta}^2+(m_1-m_2)\cos \eta -H \cos^2 \eta.
\end{align*}
Let us observe that the variable $\xi$ parametrize the family of confocal ellipses centred at $c_1$  and $c_2$. Indeed, a direct computation shows
\begin{equation}
\label{eq:def_function_ellipses}
f(x)= \vert x-c_1\vert+\vert x-c_2\vert-2 = 2 (\cosh \xi-1)  .
\end{equation}
The Hamiltonian for the $\xi$ coordinate has a unique hyperbolic critical point corresponding to $\xi =0$, i.e. the collision-reflection solution bouncing between $c_1$ and $c_2$, spanning the segment connecting the two centres.
The phase portrait of the $\xi$ Hamiltonian is depicted in Figure \ref{fig:phase_portrait1} in the appendix for some choice of masses.

Note that any branch of separatrix uniquely determines a family of solutions that are asymptotic to the collision-reflection solution. The value $I_0$ of the first integral for the $\eta$ coordinate is then completely determined by this choice and the resulting solutions are monotone, as shown in the appendix.

\begin{figure}
	\label{fig:spirals}
	\centering
	\includegraphics[width=0.85\textwidth]{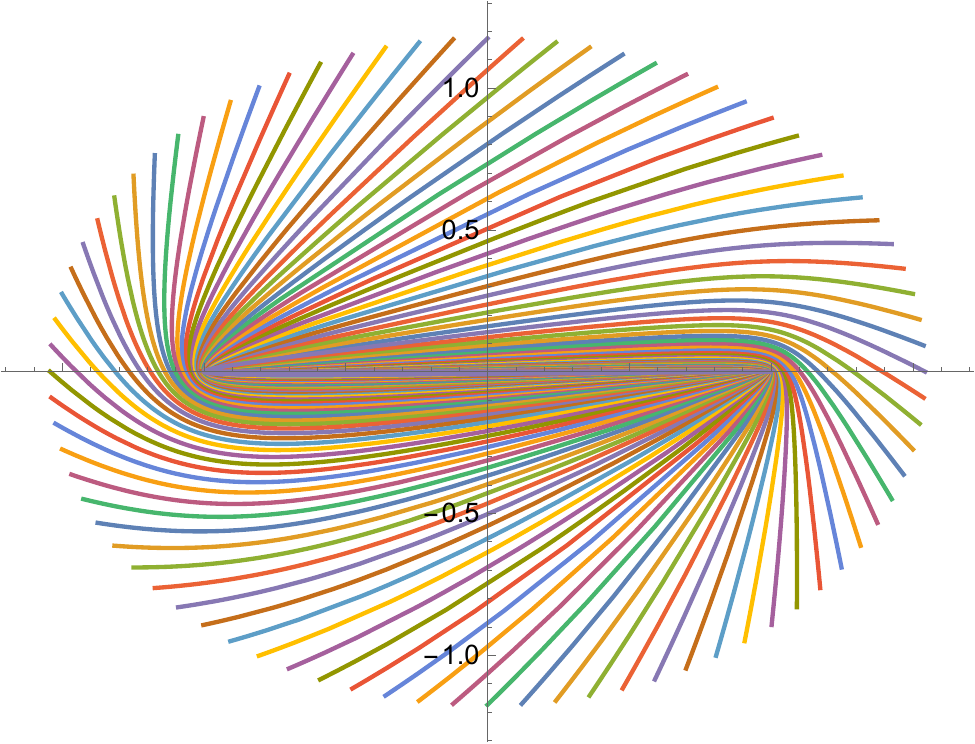}
	\caption{The curves $s_{\pm\infty}(t,\theta_0)$ given in equation \eqref{eq:def_spirals}. The mass parameters are $m_1 = \frac{3}{2}$,$m_2 = \frac{1}{2}$ and the energy is $h = 2$.}
\end{figure}

We have four families of solutions, up to time inversion, depending on whether the $\xi$ and $\eta$ are increasing or decreasing. Without loss of generality we pick the solution $\xi_{I_0}(s)$,  $s \in (-T_0,0)$ which is decreasing and denote as $\eta_{I_0}^\pm(s,\theta_0)$ the increasing/decreasing solution having $\eta^\pm_{I_0}(-T_{0}) = \theta_0$ for\footnote{Note that if $\eta$ is a solution, so is $\eta+ 2\pi$} $\theta_0 \in \mathbb{S}^1$. We consider the two families
\begin{equation*}
	\tilde{s}_{\pm\infty}(s,\theta_0) = \Phi (\xi_{I_0}(s),\eta^\pm_{I_0}(s,\theta_0)), \quad \theta_0 \in \mathbb{S}^1, s\in (-T_0,0).
\end{equation*}
After a time reparametrization $s(t,\theta_0)$ that depends on the trajectory and reverts these curves to solutions of \eqref{eq:motion_equation}, we obtain the family of solutions 
\begin{equation}
	\label{eq:def_spirals}
	s_{\pm}(t,\theta_0)  = \tilde{s}_{\pm\infty}(s(t,\theta_0),\theta_0), \quad t \in \mathbb{R}
\end{equation}
that have the following asymptotic properties
\[
\lim_{t\to-\infty} \frac{s_{\pm\infty}(t,\theta_0)}{\vert s_{\pm\infty}(t,\theta_0)\vert} =  e^{i\theta_0}, \quad \lim_{t\to +\infty} f( s_{\pm\infty}(t,\theta_0)) = 0, \quad \theta\in \mathbb{S}^1.
\]
The image of the $s_{\pm \infty}$ in the original coordinate system, for some specific value of the masses, is depicted in Figure \ref{fig:spirals}. An explicit parametrization is given in the Appendix \ref{appendix:2_centres} (equations \eqref{eq:parametrization_xi} and \eqref{eq:parametrization_eta}).

\subsection{Convergence of the stable/unstable manifolds}
The main observation of this section is that the curves $\{\gamma_k^{x,y}\}_{k\in \mathbb{Z}}$ in Definition \eqref{def:minimizers} converge, in a suitable sense, to the family $s_{\pm \infty}(t,\theta_0)$ defined in \eqref{eq:def_spirals} as $k \to \pm \infty$. Preliminarily, we prove the following consequence of the negativity of the curvature of the Jacobi-Maupertuis metric.

We say that a geodesics $\eta :[0,\infty)\to \mathbb{R}^2\setminus[c_1,c_2]$  with $\eta(0) \notin [c_1,c_2]$ is asymptotic to the collision-ejection solution if $\lim_{t\to +\infty} d(\eta(t),[c_1,c_2]) = 0$. Equivalently we can formulate this condition in terms of the function $f$ in \eqref{eq:def_function_ellipses} and require that $\lim_{t\to +\infty} f(\eta(t)) = 0$.

\begin{lemma}
	\label{lemma:uniqueness_spirals}
	For any $x \in \elld$ there is a unique $(t_0,\theta_0) \in (-T_0,0)\times \mathbb{S}^1$ such that $s_{\pm\infty}(t_0,\theta_0) = x$. Similarly, there is a unique $(t_0,\theta_0) \in (-T_0,0)\times \mathbb{S}^1$ such that $\frac{d}{dt}\vert_{t =t_0} s_{\pm\infty}(t,\theta_0)$ is orthogonal to $\elld$(and pointing towards the centres). Moreover, these are the only geodesics asymptotic to the collision-reflection solution satisfying the respective boundary conditions.
\begin{proof}
   A closer inspection of the phase portrait depicted in Figure \ref{fig:phase_portrait1} for the Hamiltonian of $\xi$ shows that $\xi$ can be either monotone or have a unique critical point. This implies that all solutions $\eta(t)$ for which the function $f$ defined in \ref{eq:def_function_ellipses} satisfies
   \[
   \lim_{t \to +\infty} f(\eta(t)) =0
   \]
   belong to the family $s_{\pm\infty}(t,\theta_0)$. It remains to show the uniqueness part. Assume that $\eta_1$ and $\eta_2$ are two geodesics of the family having $\eta_1(0) = \eta_2(0)=x$.  By the negativity of the curvature they do not have any other intersection point. Let us consider the following geodesic triangles. Pick the line $\ell'$ orthogonal to $[c_1,c_2]$ through its midpoint and define an increasing sequence $T_1^n>0$ requiring that $\eta_1(T_1^n)\in \ell'$. Consider the geodesic $\xi$ orthogonal to $\eta_1$, passing through $\eta_1(T_1^n)$, and define $\eta_2(T_2^n)$  to be the intersection of $\xi$  and $\eta_2$ determined by the condition that $\eta_1*\xi*\eta_2^{-1}$ is homotopically trivial.
	Clearly, as $n\to \infty$ the length of $\xi$ converges to zero since both $\eta_1$ and $\eta_2$ are asymptotic to $[c_1,c_2]$. 
	The angle between $\dot{\eta}_1(T_1^n)$ and $\dot{\eta}_2(T_2^n)$ converges to zero as well since $\eta_1$ and $\eta_2$ approach the collision ejection solution in the $C^1$ norm, at least away from the singularities $c_1$ and $c_2$. Thus, the angle $\alpha_n$ between $\dot{\xi}$ and $\dot \eta_2$ converges to  $\frac{\pi}{2}$. Let $\beta$ the angle between $\dot{\eta}_1(0)$ and $\dot{\eta}_2(0)$ and $\Omega_n$ the triangle bound by $\eta_1,\xi$ and $\eta_2$. Gauss-Bonnet theorem implies that
	\[
	    \int_{\Omega_n} \kappa_{JM}+ \alpha_n +\beta + \frac{\pi}{2} = 2 \pi. 
	\]
	However, taking the limit, this is possible if and only if $\eta_1 = \eta_2$.
	
	The proof of the second item is similar. Recall that the line $\elld$ is negatively curved as long as we choose the normal to point in the half space containing the centre (see Lemma \ref{lemma:convexity_mirror}). Assume that there are two geodesics $\eta_1$ and $\eta_2$ in the family $s_{\pm\infty}(t,\theta_0)$ both orthogonal to $\elld$. Without loss of generality we may assume $\eta_1(0)>\eta_2(0)$. We can build geodesic parallelograms $Q_n$ having the sum of the turning angles arbitrarily close to $2 \pi$, as showed before. Thus for some $\ve_n \to 0$,
	\[
	\int_{Q_n} \kappa_{JM}+\int_{[\eta_1(0),\eta_2(0)]} \kappa_{\elld}+2\pi-\ve_n = 2 \pi. 
	\]
	However, this is not possible unless $\eta_1$ and $\eta_2$ coincide. 
\end{proof}
\end{lemma}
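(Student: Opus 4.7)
The plan is to combine the classification of asymptotic trajectories coming from the separable structure of the $2$-centre problem with two Gauss--Bonnet arguments exploiting the negative curvature of the Jacobi--Maupertuis metric (and, for the second item, of the line $\elld$).

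First, I would argue that any geodesic $\eta$ satisfying $\lim_{t\to+\infty} f(\eta(t))=0$ must belong to the family $\{s_{\pm\infty}(\cdot,\theta_0)\}$. The idea is to read this off the phase portrait of the $\xi$-Hamiltonian: the hyperbolic equilibrium $\xi=0$ corresponds to the collision segment, and $f(\eta(t))=2(\cosh\xi(t)-1)\to 0$ forces $(\xi(t),\dot\xi(t))$ onto a separatrix of that Hamiltonian. Each branch of the separatrix, coupled with the monotone $\eta$-dynamics at the corresponding value $I_0$ of the first integral, gives precisely the family $s_{\pm\infty}(t,\theta_0)$ up to the allowed time reparametrization. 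This disposes of the ``these are the only geodesics'' clause and reduces both uniqueness statements to injectivity statements within this family.

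For the first uniqueness claim, assume two distinct geodesics $\eta_1,\eta_2$ in the family share the point $x\in\elld$ and tend to $[c_1,c_2]$. Since the JM curvature is strictly negative, two distinct geodesics from a common point cannot meet again in a simply connected region, so the only way to fail injectivity is through a ``triangle at infinity.'' I would build such a triangle explicitly: take the perpendicular bisector $\ell'$ of $[c_1,c_2]$, let $T_1^n\to\infty$ with $\eta_1(T_1^n)\in\ell'$, draw the geodesic $\xi_n$ through $\eta_1(T_1^n)$ orthogonal to $\eta_1$, and let $\eta_2(T_2^n)$ be the intersection of $\xi_n$ with $\eta_2$ selected so that $\eta_1\ast\xi_n\ast\eta_2^{-1}$ bounds a topological disk $\Omega_n$. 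Since both curves approach the collision--ejection solution in $C^1$ away from $c_1,c_2$, the length of $\xi_n$ goes to zero and the turning angle $\alpha_n$ at $\eta_2(T_2^n)$ tends to $\pi/2$. Gauss--Bonnet yields
\[
\int_{\Omega_n}\kappa_{JM} + \alpha_n + \beta + \tfrac{\pi}{2} = 2\pi,
\]
where $\beta$ is the turning angle at $x$; passing to the limit and using $\kappa_{JM}<0$ forces $\beta=0$, contradicting distinctness.

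The second uniqueness claim is handled similarly, but with a geodesic parallelogram instead of a triangle. Suppose $\eta_1,\eta_2$ are two distinct members of the family each meeting $\elld$ orthogonally, at points $p_1\neq p_2$. Build $Q_n$ with sides $\eta_1,\eta_2$, the segment $[p_1,p_2]\subset\elld$, and the short transversal $\xi_n$ at the far end as above. The new ingredient is Lemma \ref{lemma:convexity_mirror}: the geodesic curvature of $\elld$ with normal pointing toward the centres is negative, so $\int_{[p_1,p_2]}\kappa_{\elld}<0$. The two far-end turning angles sum to nearly $2\pi$, while the two right angles at $p_1,p_2$ contribute $0$ as turning angles, so Gauss--Bonnet gives
\[
\int_{Q_n}\kappa_{JM} + \int_{[p_1,p_2]}\kappa_{\elld} + (2\pi-\varepsilon_n) = 2\pi,
\]
whose left-hand side is strictly less than $2\pi$ for large $n$ — a contradiction.

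The main obstacle, in my view, is the geometric/analytic bookkeeping in the triangle-at-infinity construction: verifying that $\xi_n$ meets $\eta_2$ in the correct homotopy class, that the enclosed region $\Omega_n$ is actually a disk contained in the collisionless domain (so that Gauss--Bonnet applies cleanly), and that the $C^1$ convergence of $\eta_1,\eta_2$ to the collision--reflection solution is genuinely uniform away from the singularities so that the turning angle $\alpha_n$ really tends to $\pi/2$. All of this hinges on the explicit asymptotics of $s_{\pm\infty}$ derivable from the elliptic-hyperbolic parametrization of Appendix \ref{appendix:2_centres}.
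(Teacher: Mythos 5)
Your proposal follows the paper's own proof essentially step for step: the same phase-portrait/separatrix classification reducing to the family $s_{\pm\infty}$, the same triangle-at-infinity Gauss--Bonnet argument with the perpendicular bisector $\ell'$ and the orthogonal transversal, and the same geodesic-parallelogram argument using the negative curvature of $\elld$ for the second claim. The bookkeeping issues you flag (homotopy class of the transversal, the region being a disk, uniformity of the $C^1$ convergence) are exactly the points the paper also treats briefly, so there is nothing substantive to add.
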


With a slight abuse of notation, we can thus denote by $s_{\pm\infty}^x$ and $s_{\pm\infty}$ the unique geodesics parametrized as solutions of \eqref{eq:motion_equation} on $[0,\infty)$ and having \[
s_{\pm\infty}^x(0) = x, \quad \dot s_{\pm\infty}(0) \perp \elld.
\]

Let us parametrize the $\gamma_k^{x,y}$ so that they are solutions of \eqref{eq:motion_equation} and they are defined on some interval $[0,T_k^{x,y}]$. Then we extend each $\gamma_k$ on $[0,\infty)$ setting $\gamma_k^{x,y}(t) = \gamma_k^{x,y}(T_k^{x,y})$ for $t\ge T_k^{x,y}$. With a slight abuse of notation we call these extensions $\gamma_k^{x,y}$ as well.
We have the following.
\begin{prop}
	\label{prop:existence_eteroclinics}
     For any compact interval $I\subset \elld$, $x,y\in I$ and $T>0$ the maps $(x,t)\mapsto \gamma_k^{x,y}$ converge in the $C^2(I \times [0,T])$ topology to $s_{\pm\infty}^x$, uniformly in $y$ as $k \to \pm \infty$.
 \begin{proof}
 	We consider the case $k \to +\infty$. The other case is completely analogous. The steps of the proof are the following. First, we show that, for fixed $x,y \in I$, $\{\gamma_k^{x,y}\}_k$ admits a convergent subsequence  to a solution of \eqref{eq:motion_equation} on any compact interval $[0,T]$ for all $T>0$. Then, we show that this solution must be asymptotic to the collision-ejection one and thus is the $s^x_{\infty}$ given in Lemma \ref{lemma:uniqueness_spirals}. Since the same argument applies to any subsequence $\{\gamma_{k_n}^{x,y}\}_{n\in \mathbb{N}}$ we can conclude that the whole $\{\gamma_k^{x,y}\}_k$ converges to $s^{x}_{\infty}$.
 	Next, we show that the family $\{\gamma_k^{x,y}\}_{k,y}$ (seen as functions of the variables $(x,t)$) is compact in the $C^2(I\times [0,T])$ topology as well, for any $T>0$. Since the point-wise limit is necessarily the map $(x,t) \mapsto s^x(t)$, we get again convergence of the full family $\{\gamma_k^{x,y}\}_{k,y}$ to $s^x_{\infty}$ and the proposition follows.
 	
 	The first observation we make is that, for any $x,y\in I$ and $k\in\mathbb{Z}$, there exists $C_T>0$ such that 
 	\[
 	     \int_0^T\vert\dot{\gamma}^{x,y}_{k}\vert^2\le C_T.
 	\]
 	Indeed, let us consider the lifts $\tilde \gamma_k^{x,y}$ of the $\gamma_k^{x,y}$ to the universal cover of $\mathbb{R}^2\setminus [c_1,c_2]$. Since all these curves satisfy the energy equation
 	\[
 	h = \frac12\vert \dot{\gamma}\vert^2-V(\gamma),
 	\]
 	their velocity is bounded outside small balls centred at $c_1$ and $c_2$. This means that the image of the  ${\gamma}_k^{x,y}$ is bounded in the universal cover. Since the $L^2$ norm of the velocity coincides with the JM length for any curve satisfying the energy equation, we have
 		\[
 	\int_0^T\vert\dot{\gamma}^{x,y}_{k}\vert^2 = \int_0^T \sqrt{2(h+V)}\vert \dot{\gamma}_k^{x,y}\vert\le C_T.
 	\] 
 	
 	In particular, it follows that the family $\{\gamma_k^{x,y}\}_k$ is weakly precompact in $H^1[0,T]$ for any $T>0$. Passing to a subsequence, we can assume that $\gamma_k^{x,y}$ converges uniformly to a continuous curve $\gamma_\infty^{x,y}$ on any $[0,T]$. 
 	
 	Next, we show that $d(\gamma_\infty^{x,y}\vert_{[0,T]},\{c_1,c_2\})\ge \delta_T$ for some $\delta_T>0$.
 	Let us assume by contradiction that this is false and define $\bar{t} = \inf\{t :d(\gamma_\infty^{x,y}(t),\{c_1,c_2\})=0\}\in (0,T]$. The standard bootstrap argument shows that $\gamma_\infty^{x,y}\vert_{[0,t]}$ is a solution of \eqref{eq:motion_equation} for any $t< \bar{t}$.  	
    Let us pick $r>0$ small and consider the ball $B_1 = B_r(c_1)$. Restricting the $\gamma_{k}^{x,y}$, we obtain a family of length minimizers joining points in $\partial B_1$ and approaching a collision. By the blow-up argument in \cite[Lemma 4.6]{baranzini2024chaotic} or \cite[Section 4.4]{SoaTer2012} we conclude that a suitable rescaling of this minimizers converge to a limit having an angular variation of at least $2\pi$. Since, thanks to minimality, none of the $\gamma_k^{x,y}$ can cross the segment $[c_1,c_2]$ which is the support of a solution, we conclude that $\gamma_\infty^{x,y}\vert_{[0,\bar t)}$ must coincide with a piece of the collision-ejection solution. This however is not possible since $\gamma_\infty^{x,y}(0) = x$.
    Having proved that there exists $\delta_T>0$ as above, an application of Ascoli-Arzelà theorem yields the $C^2[0,T]$ convergence of a subsequence for all $T>0$ to a collision-less curve $\gamma_{\infty}^{x,y}$.   
       
    Now, let us prove that $\gamma^{x,y}_{\infty}$ is asymptotic to the collision-reflection solution. This follows from the minimality property of the collision-ejection solution: it is the only positive energy closed minimizer of the Jacobi Maupertuis length. We now show that the curves $\gamma_k^{x,y}$ must approach it as $k\to \infty$.
    Indeed, let us assume by contradiction that there exists $\delta>0$ such that $d(\gamma_k^{x,y},[c_1,c_2])\ge \delta$ for all $k$ big enough. For $k$ large enough, the curve built concatenating a segment joining $x$ to $c_1$ followed by $k$ iteration of the collision-ejection solution  and a segment joining $c_1$ to $y$ is eventually shorter than $\gamma_k^{x,y}$.
    Thus, $\lim_k d(\gamma_k^{x,y},[c_1,c_2]) =0$.   
    Recall that the function $f$ defined in \eqref{eq:def_function_ellipses} is either monotone or has a unique minimum point along solutions of the two centre problem (compare with \eqref{eq:function_ellipse}). The restriction of $f$ to the $\gamma_k^{x,y}$ has then a unique minimum since the $\gamma_k^{x,y}$ approach $[c_1,c_2]$ as $k \to +\infty$. This must be obtained on a sequence $T_k\to \infty$ since, otherwise, we could find a subsequence of the $\gamma_k^{x,y}$ converging to a collision solution. Thus, being the limit of decreasing functions on each fixed interval $[0,T]$, $f(\gamma_\infty^{x,y})$ is monotone decreasing and $\gamma_{\infty}^{x,y}$ is asymptotic to the collision-reflection solution. 
   Thanks to Lemma \ref{lemma:uniqueness_spirals}, we know that the unique geodesic passing through $x$ and asymptotic to $[c_1,c_2]$ is $s_{\infty}^x$. 
   
   We have thus proved that any subsequence of the $\{\gamma_{k}^{x,y}\}_{k\in \mathbb
   N}$ which converges uniformly on compacts must converge to the same limit $s^x_\infty$ which in particular is independent on $y$. Thus the full sequence $\{\gamma_k^{x,y}\}_k$ converges to $s^x_{\infty}$.
   
   Now, we prove the uniform convergence on $I\times [0,T]$ for every $T>0$, uniformly in $y \in I$.  By the previous point we know that, if a subsequence converges uniformly, the limit map is necessarily given by   $(x,t) \mapsto s^x_{\infty}(t)$. Thus, we need only to show the compactness of the family $\{\gamma_k^{x,y}\}_{k \in \mathbb{N}, y \in I}$, seen as functions of the variables $(x,t)$.  
   To do so, we want to apply again the Ascoli-Arzelà theorem. 
   To this extent we have to compute the derivatives of the $\gamma_k^{x,y}$ with respect to $x$. Let us observe that 
   \[
   J_{x_0}(t)=\left(\frac{d}{dx} \gamma_k^{x,y}(t)\right)\vert_{x=x_0}
   \]
   is a Jacobi field along $\gamma_k^{x_0,y}$. Since we have that the curvature $\kappa_{JM}$ of the Jacobi-Maupertuis metric satisfies $-c\le \kappa_{JM}<0$, the growth of the components of $J_{x_0}(t)$ in a parallel frame is at most exponential. We have thus to show that the (euclidean) norm of a orthonormal parallel frame is bounded. Clearly, such a frame is given by $\dot{\gamma}_k^{x,y}$ and $\left(\dot{\gamma}_k^{x,y}\right)^\perp$.  Thanks to the energy equation, to control the norm of the frame, we have to show that the geodesics do not pass arbitrarily close to collision on any finite interval $[0,T]$.
   We argue again by contradiction and assume that there is a subsequence $\{k_n\}_{n \in \mathbb{N}}$ for which 
   \begin{equation}
   	\label{eq:collision}
       \inf_{t \in [0,T]} d(\gamma_{k_n}^{x_n,y_n}(t),\{c_1,c_2\}) \to 0
   \end{equation}
   as $n \to \infty$. By compactness, we can refine the subsequence and assume that $x_n\to \bar{x}$ monotonically.
   We now employ the following form of monotonicity. No minimizer $\gamma_{k}^{x,y}$ can intersect $\gamma_{k}^{x, \inf I}$ nor $\gamma_{k}^{x,\sup I}$ for fixed $k$ and $x \in I$.
   In particular, if \eqref{eq:collision} holds, $\gamma_{k_n}^{x_n,\sup I}$ must satisfy 
    \[
   \inf_{t \in [0,T]} d(\gamma_{k_n}^{x_n,\sup I}(t),\{c_1,c_2\}) \to 0
   \]
   as well.  If $x_n$ is decreasing we obtain again by monotonicity that
     \[
   \inf_{t \in [0,T]} d(\gamma_{k_n}^{\bar{x},\sup I}(t),\{c_1,c_2\}) \to 0
   \]
   which contradicts the convergence for fixed $x$ and $y$ already proved. Thus $x_n$ must be increasing. However, in this case, we have that for any $z < \bar{x}$
     \[
   \inf_{t \in [0,T]} d(\gamma_{k_n}^{z,\sup I}(t),\{c_1,c_2\}) \to 0
   \]
  which is again a contradiction.
   
  Thus, any subsequence converging uniformly on $I \times [0,T]$ converges to a immersed surface contained in $\mathbb{R}^2 \setminus [c_1,c_2]$. By a standard bootstrap argument, it converges in the $C^2([0,T]\times I)$ topology as well. Since the only possible point-wise limit is the map $(x,t)\to s_\infty^x(t)$, we conclude that $\gamma_k^{x,y}\to s_\infty^x(t)$ in the $C^2$ topology on $I \times [0,T]$ for any $T>0$, uniformly in $y\in I$.   
 \end{proof}
\end{prop}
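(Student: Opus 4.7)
The plan is to first establish pointwise convergence $\gamma_k^{x,y}\to s_{\infty}^x$ for fixed $x,y\in I$ in $C^2_{\mathrm{loc}}([0,\infty))$, and then upgrade this to joint $C^2$ convergence in $(x,t)$ uniformly in $y$ via Jacobi-field estimates. Throughout I treat $k\to+\infty$ (the case $k\to-\infty$ being symmetric). First I parametrize each $\gamma_k^{x,y}$ as a solution of \eqref{eq:motion_equation}, so that the energy relation $h=\tfrac12|\dot\gamma|^2-V(\gamma)$ holds. Lifting to the universal cover of $\mathbb{R}^2\setminus[c_1,c_2]$ and using the identity between JM length and $\int\sqrt{2(h+V)}|\dot\gamma|\,dt$, I obtain a uniform $H^1$ bound of $\gamma_k^{x,y}$ on each $[0,T]$, hence a weakly convergent subsequence with uniform $C^0$ convergence on compacts to some $\gamma_\infty^{x,y}$.

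Next I rule out collisions in the limit on any finite interval. If $\gamma_\infty^{x,y}$ approached $c_1$ or $c_2$, a blow-up analysis near that centre (as in \cite[Lemma 4.6]{baranzini2024chaotic}) would force the rescalings of the $\gamma_k^{x,y}$ to have total angular variation $\ge 2\pi$, and since no minimizer can cross the segment $[c_1,c_2]$, the limit would have to coincide with a piece of the collision-ejection solution; this is incompatible with $\gamma_\infty^{x,y}(0)=x\notin[c_1,c_2]$. With collisions excluded, a standard bootstrap promotes the convergence to $C^2_{\mathrm{loc}}$, and $\gamma_\infty^{x,y}$ solves \eqref{eq:motion_equation}. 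To identify the limit, I show $d(\gamma_k^{x,y},[c_1,c_2])\to 0$: otherwise the comparison curve given by a short arc from $x$ to $c_1$, followed by $k$ iterations of the collision-ejection solution, followed by an arc from $c_1$ to $y$, would eventually be JM-shorter than $\gamma_k^{x,y}$. Using that the function $f$ in \eqref{eq:def_function_ellipses} is monotone along any solution of the two-centre problem once past its unique critical time, I conclude $f(\gamma_\infty^{x,y}(t))\to 0$, so $\gamma_\infty^{x,y}$ is asymptotic to the collision-ejection solution. By Lemma \ref{lemma:uniqueness_spirals} the only such geodesic through $x$ is $s_\infty^x$, which in particular does not depend on $y$. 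Since every subsequence has the same limit, the whole sequence converges.

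For the joint upgrade I apply Ascoli-Arzelà to the family $\{\gamma_k^{x,y}\}_{k,y}$ viewed as functions of $(x,t)$. Derivatives in $x$ are Jacobi fields along $\gamma_k^{x,y}$; since the Gaussian curvature $\kappa_{JM}$ is bounded and strictly negative, Rauch comparison gives at most exponential growth of Jacobi fields in the parallel frame on $[0,T]$. To convert this into a uniform Euclidean estimate I need to bound the norm of the parallel frame, which via the energy relation reduces to keeping the $\gamma_k^{x,y}$ uniformly away from $\{c_1,c_2\}$ for $x,y\in I$ and $t\in[0,T]$. I establish this by contradiction plus monotonicity: for fixed $k$ the family $\{\gamma_k^{x,y}\}_y$ is ordered (no two members in the class can cross transversally without losing minimality), so if some $\gamma_{k_n}^{x_n,y_n}$ approached a centre, the extremal minimizers $\gamma_{k_n}^{x_n,\sup I}$ or $\gamma_{k_n}^{x_n,\inf I}$ would do the same, and passing to the monotone limit of $x_n\to\bar x$ this would contradict the pointwise convergence already established for fixed endpoints. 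Bootstrapping once more then yields $C^2$ equicontinuity, and since the only possible pointwise limit is $(x,t)\mapsto s_\infty^x(t)$, the full family converges in $C^2(I\times[0,T])$ uniformly in $y$.

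The main obstacle is the last step: ruling out approaches to collision uniformly in $(k,x,y)$. The fixed-endpoint argument only controls $\gamma_k^{x,y}$ for a prescribed $(x,y)$, and one needs the topological ordering of minimizers by the endpoint $y$ together with monotonicity in $x$ to propagate the bound. All the compactness and bootstrap steps are standard once the no-collision estimate holds.
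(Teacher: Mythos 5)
Your proposal is correct and follows essentially the same route as the paper's own proof: uniform $H^1$ bounds via the universal cover and the energy relation, exclusion of collisions by the blow-up argument, identification of the limit through the comparison curve and the monotonicity of $f$ together with Lemma \ref{lemma:uniqueness_spirals}, and the joint $C^2$ upgrade via Jacobi fields under bounded negative curvature with the ordering/monotonicity argument to keep minimizers uniformly away from the centres. No substantive differences to report.
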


\subsection{Proof of Theorem \ref{thm:main_2_centres}}

\begin{thm}
	\label{thm:periodic_two centers}
	Let $\elld$ an admissible line. There exists $k_0\in \mathbb{N}$ such that, for any $r\ge0$, the  length minimizers in the space $\mathcal{C}_{i_1,\dots,i_{2r}}$ given in \eqref{eq:def_space_paths} are billiard trajectories provided that $\vert i_j\vert>k_0$ and $\mathrm{sign}(i_j) = - \mathrm{sing}(i_{j+1})$ for all $j$.
    \begin{proof}
    	We denote by $x_\pm = s_{\pm\infty}(0)$ the initial point of the geodesics orthogonal to $\elld$. First we prove that there exist small neighbourhoods of $x_\pm$ for which all the $s_{\pm \infty}^x$ are completely contained in the half-plane $\Pi_{\elld}$.
    	
    	Let us consider the universal cover of $\mathbb{R}^2\setminus [c_1,c_2]$, a lift of $s_{+\infty}$ starting from $\elld^0$, a lift of $\elld$, and all the successive lifts $\elld^m$ of $\elld$ for $m>0$. Thanks to the admissibility condition, the distance between the lift of $s_{\infty}$ and any $\elld^m$ is $d_m>0$ (and increasing).     	
    	Let us further define
    	\[
    	f_0 = \min_{x \in \elld} f(x),
    	\]
    	where $f$ is the function defined in $\eqref{eq:def_function_ellipses}$. Since $s_\infty$ is asymptotic to the collision ejection solution, there exists $T_0>0$ such that 
    	\[
    	\frac{f_0}{4} = f(s_\infty(T_0))>f(s_{\infty}(t)), \, \forall t>T_0.
    	\]
    	
    	Thanks to the continuous dependence on initial conditions, for any $\ve>0$ there exists a $\delta(\ve)>0$ such that
       	\[
       	 \forall t \in [0,T_0], \,\forall x :  \vert x- x_+\vert <\delta(\ve), \quad \vert s_{+\infty}^x(t)-s_{+\infty}(t)\vert <\ve.
       	\] 
       	Since $f$ is continuous there exists $\delta_1>0$ such that  
       	\[
       	\forall t \in [T_0,\infty), \, \forall x:   \vert x- x_+\vert <\delta_1, \quad f(s^x_{\infty}(t))\le \frac{f_0}{2}.
       	\]
       	If we choose $\ve = \frac{d_1}{2}$ and pick $2\delta' = \min \{\delta(\ve), \delta_1\}$ we have that for all $x$ in $I_+ = [x_+-\delta',x_++\delta']$ all solution $s_{\infty}^x(t)$ never touch $\elld$ and for all times $t>T_0$ are properly contained in an ellipse of diameter $\frac{f_0}{2}$.
       	Performing the same construction for $s_{-\infty}$, we can find an interval $I_-$ of $x_-$ with the same properties.
       	
       	Thanks to Proposition \ref{prop:existence_eteroclinics}, we know that the family $\{\gamma_k^{x,y}(t)\}_{k\in \mathbb{Z}}$ converge uniformly on $I\times [0,T]$ to $s_{\pm\infty}^{x}(t)$, for any fixed $I$ and $T>0$. This means that there exists $k_+$ big enough so that
    \[
   \forall x\in I_+, \, \forall y \in I_-, \,\forall k\ge k_+, \quad
    f(\gamma_k^{x,y}(T_0))<f_0
    \]
    and the distance between $\gamma_k^{y,x}(t)$ and the first copy of $\elld$ is at least $\frac{d_0}{4}$ for all $t \in [0,T_0]$. Performing the same argument for $I_-$ we can find a common $k_0$ so that, for any $k :\vert k\vert >k_0$,  $x \in I_+$ and $y \in I_-$ all the $\gamma_k^{x,y}$  intersect $\elld$ only in $x$ and $y$. 
    
    Let us observe that the angle between $\dot{s}^x_{+\infty}(0)$ and $\elld$ is monotone in a neighbourhood of $x_+$. This can proved using the same argument as in Lemma \ref{lemma:uniqueness_spirals}. We denote this angle as $\alpha^x$. Up to shrinking $I_+ = [x_+-\delta',x_++\delta']$, there exists a $\theta_0$ such that $\vert \alpha^{x_+ \pm \delta'}-\frac{\pi}{2}\vert \ge 2 \theta_0$.
    
    Since the $\gamma_k^{x,y}$ converge uniformly in the $C^{2}$ topology, up to passing to a larger $k_0$, the angle $\alpha_k^{x_\pm\pm \ve,y}$ made by  $\dot{\gamma}^{x_\pm\pm \ve,y}_k(0)$ satisfies $\vert \alpha_k^{x_\pm\pm \ve,y}-\frac{\pi}{2}\vert>\delta$ for any $k$  such that $\vert k \vert\ge k_0$ and $y \in I_-$. An analogous observation holds for $\dot{s}_{-\infty}^x(0)$ as well. 
    
    We now prove the existence of periodic trajectories. Assume that $(i_1,\dots i_{2r})$ is a sequence such that  $\mathrm{sign}(i_j) = - \mathrm{sing}(i_{j+1})$ for all $j$. Without loss of generality assume $i_1>0$. Let us consider the space $\mathcal{C}^\ve_{(i_1,\dots,i_{2r})}$ defined as
    \[
        \mathcal{C}^\ve_{(i_1,\dots,i_{2r})} = \{ 
        \gamma_{i_1}^{x_1,x_2}*\dots*\gamma_{i_{2r}}^{x_{2r},x_1}:\, x_{i_{2k-1}}\in I_+,\, x_{i_{2k}}\in I_-,\, 1 \le k \le r
        \}
    \]
    corresponding to a small $2r$-dimensional cube. Since the space is compact, length-minimizers in $\mathcal{C}^\ve_{(i_1,\dots,i_{2r})}$ always exists. Moreover, by construction, all curves in $\mathcal{C}^\ve_{(i_1,\dots,i_{2r})}$ lie in the half-plane containing the centres. 
    
    If $\gamma$ is a length minimizer and $x$ a bouncing point belonging to the interior of $I_+$ or $I_-$, one can show that the elastic reflection condition holds exactly as in Theorem \ref{thm:existence_periodic}. To conclude, we prove that minimizers never touch the boundary of $\mathcal{C}^\ve_{(i_1,\dots,i_{2r})}$.

    Suppose by contradiction that a bouncing point occurs on the boundary. Without loss of generality we can assume that this bouncing point is $x_+ +\delta'$ and consider the portion of $\gamma$ given by $\gamma_{-m_1}^{y_1,x_+ +\delta'}*\gamma_{m_2}^{x_+ +\delta', y_2}$ for some $y_i\in I_-$ and $m_i\ge0$.     
    By construction, both curves $\gamma_{m_i}^{x_++\delta',y_i}$ make an angle $\beta_i$ which satisfies $\vert\frac{\pi}{2}-\beta_i\vert \ge \theta_0$. This implies that both of these segments are not critical points of the length among paths joining $y_i$ to $\elld$ and making $m_i$ turns around $[c_1,c_2]$. Shifting the endpoint $x_++\delta'$ towards $x_+$ is a length-decreasing variation for both of them and thus $\gamma$ cannot be a length-minimizer, a contradiction.
\end{proof}
\end{thm}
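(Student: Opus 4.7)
The plan is to reduce the problem to a compact perturbation of the orthogonal heteroclinic geodesics $s_{\pm\infty}$ using the convergence result in Proposition \ref{prop:existence_eteroclinics} and then rerun a localized version of the variational argument of Theorem \ref{thm:existence_periodic}. The sign alternation hypothesis is exactly what lets consecutive arcs be modeled on $s_{+\infty}$ and $s_{-\infty}$ respectively, so each junction between two arcs happens near $x_{+}=s_{+\infty}(0)$ or $x_{-}=s_{-\infty}(0)$. Admissibility guarantees that $s_{\pm\infty}$ sits in the interior of $\Pi_{\elld}$, which is the open condition we will propagate.

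First I would choose small closed intervals $I_{\pm}\subset \elld$ centered at $x_{\pm}$ and of radius $\delta'$, small enough that every $s_{\pm\infty}^{x}$ with $x\in I_{\pm}$ still lies in $\Pi_{\elld}$ and meets $\elld$ only at $x$; this follows from continuous dependence on initial data together with the fact that $f(s_{\pm\infty}(t))$ decays monotonically to $0$ (so tails are trapped inside an arbitrarily thin confocal ellipse around $[c_{1},c_{2}]$). Next, applying Proposition \ref{prop:existence_eteroclinics} to $I_{+}\times I_{-}$ and $I_{-}\times I_{+}$, one extracts a single $k_{0}\in\mathbb{N}$ such that for all $|k|>k_{0}$ and all admissible pairs of endpoints, the minimizer $\gamma_{k}^{x,y}$ is $C^{2}$-close to $s_{\mathrm{sgn}(k)\infty}^{x}$ on any fixed time interval and hence also lies in $\Pi_{\elld}$ and meets $\elld$ only at its endpoints.

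Existence of length-minimizers in the restricted space
\[
\mathcal{C}^{\varepsilon}_{(i_{1},\dots,i_{2r})}=\{\gamma_{i_{1}}^{x_{1},x_{2}}\ast\cdots\ast\gamma_{i_{2r}}^{x_{2r},x_{1}}:\ x_{2k-1}\in I_{+},\ x_{2k}\in I_{-}\}
\]
is then routine, since the parameter set is a compact $2r$-cube and every curve in it already lies in $\Pi_{\elld}$ by the previous step. The elastic reflection condition at interior bouncing points is obtained verbatim as in Step~1 of Theorem \ref{thm:existence_periodic}: one embeds a one-parameter family of moves of the bouncing point along $\elld$ into the admissible configurations without changing the length and differentiates.

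The main obstacle is the standard one for constrained minimizers: ruling out that the optimal bouncing points slide to the boundary of the cube, i.e.\ to the endpoints of $I_{\pm}$. The proposed mechanism is an angle argument. By Lemma \ref{lemma:uniqueness_spirals} the curve $s_{+\infty}^{x}$ meets $\elld$ orthogonally only at $x=x_{+}$, and a Gauss--Bonnet argument like the one in that lemma shows the incidence angle $\alpha^{x}$ depends monotonically on $x$, so $|\alpha^{x_{+}\pm\delta'}-\pi/2|\ge 2\theta_{0}$ for some $\theta_{0}>0$; the $C^{2}$-convergence then transfers this angle gap to $\gamma_{k}^{x_{+}\pm\delta',y}$ for every $y\in I_{-}$ and every $|k|>k_{0}$ (after possibly enlarging $k_{0}$). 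If a minimizer had a bouncing point at, say, $x_{+}+\delta'$, then each of the two arcs meeting there would fail to be critical among arcs with one free endpoint on $\elld$, and sliding the common endpoint inward along $\elld$ would strictly decrease the total length of both simultaneously, contradicting minimality. This confines bouncing points to the interior of $I_{\pm}$, so minimizers are true billiard trajectories.
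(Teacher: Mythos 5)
Your proposal is correct and follows essentially the same route as the paper: localize the bouncing points in small intervals $I_\pm$ around the feet $x_\pm$ of the orthogonal heteroclinics using admissibility and the confocal-ellipse trapping via $f$, invoke Proposition \ref{prop:existence_eteroclinics} to transfer containment in $\Pi_{\elld}$ and the incidence-angle gap to $\gamma_k^{x,y}$ for $|k|>k_0$, minimize over the compact cube, and exclude boundary bouncing points by the monotone-angle/length-decreasing-slide argument. The only place you are terser than the paper is in deducing that $\gamma_k^{x,y}$ meets $\elld$ only at its endpoints: since the $C^2$-convergence is only on compact time intervals, one must combine closeness to $s_{\pm\infty}^x$ on $[0,T_0]$ with the fact that $f\circ\gamma_k^{x,y}$ has a unique minimum and stays below $f_0=\min_{\elld}f$ on the middle portion, exactly as the paper does.
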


 Now we prove a statement analogous to Theorem \ref{thm:non_periodic_orbits}. Recall that to any sequence $\{i_j\}_{j \in \mathbb{Z}}$ we can associate a billiard trajectory obtained concatenating arcs winding $i_j-$times around the centres. Let $\xi:\mathbb{R}\to \mathbb{R}^2\setminus \{c_1,c_2\}$ a billiard trajectory, assume that $\xi(0) \in \elld$ and denote by $\xi_i, i \in \mathbb{Z}$ the restriction of $\xi$ between two consecutive bounces.
\begin{thm}
	\label{thm:non_periodic_orbits_2centres}
	Let $\elld$ an admissible line. There exists $k_0\in \mathbb{N}$ such that, for any sequence $(i_j)_{j \in \mathbb
	Z}$ such that  $\vert i_j\vert>k_0$ and $\mathrm{sign}(i_j) = - \mathrm{sing}(i_{j+1})$ for all $j$, there exists a billiard trajectory $\xi$ such that $\xi_j \in \mathcal{C}_{i_j}$ for all $j$.
\begin{proof}
	The proof of the Theorem is similar to the one of \ref{thm:non_periodic_orbits}. The only significant difference is that we cannot bound the distance from the centres $c_1,c_2$ in a uniform way, for all the trajectories we are going to build. We argue as follows.
	Take a sequence $(i_j)_{j \in \mathbb{Z}}$ with $\vert i_j \vert \ge k_0$ (the one given in Theorem \ref{thm:periodic_two centers}) and $ \mathrm{sign}(i_j) = - \mathrm{sing}(i_{j+1})$ for all $j$. For any $n \in \mathbb{N}$, let us consider the finite subsequence $(i_{-n}, \dots, i_{n-1})$ and the relative periodic billiard trajectories of Theorem \ref{thm:periodic_two centers} which we call $\eta_n$. Without loss of generality we can assume that $\eta_n$ are extended to the whole $\mathbb{R}$ and $\eta_n(0)\in \elld$. 
	
	Let $T>0$. Clearly, any $\eta_n$ has finitely many bounces on $[-T,T]$. This implies that, for any such interval there are just a finite number of indexes $k_{1}, \dots k_m$ for which the segments of $\eta_n$ between consecutive bounces belong to $\mathcal{C}_{k_i}$. By the same continuity argument of Theorem \ref{thm:non_periodic_orbits}, the family $\{\eta_n\vert_{[-T,T]}\}_n$ is pre-compact in the $C^0([-T,T])$ topology. By the usual diagonalization argument and the unfolding argument of Theorem \ref{thm:non_periodic_orbits}, this yields a subsequence converging on any compact interval $[-T,T]$ to a $C^1$ curve $\eta$, in the $C^1$ topology. Finally, by construction (recall that all trajectories built in \ref{thm:periodic_two centers} are almost orthogonal to $\elld$), all the $\eta_n$ are transversal to $\elld$ and thus $\eta^{-1}(\elld)$ is discrete. So, we have $C^2$ convergence to a solution of  \eqref{eq:motion_equation} any compact interval of $\mathbb{R}\setminus\eta^{-1}(\elld)$.
\end{proof}
\end{thm}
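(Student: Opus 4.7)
The plan is to realize the desired billiard trajectory $\xi$ as a limit, on compact time intervals, of the periodic billiard orbits furnished by Theorem \ref{thm:periodic_two centers}, in exactly the same spirit as in the proof of Theorem \ref{thm:non_periodic_orbits}. Fix a bi-infinite sequence $(i_j)_{j\in\mathbb{Z}}$ with $|i_j|>k_0$ and $\mathrm{sign}(i_j)=-\mathrm{sign}(i_{j+1})$. For each $n\in\mathbb{N}$, truncate to the finite word $(i_{-n},\dots,i_{n-1})$ and let $\eta_n$ be an associated periodic billiard trajectory from Theorem \ref{thm:periodic_two centers}, extended by periodicity to all of $\mathbb{R}$ and normalized so that $\eta_n(0)\in\elld$ is one of its bouncing points.

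\textbf{Local compactness and passage to the limit.} Fix any $T>0$. On $[-T,T]$ each $\eta_n$ performs a bounded number of reflections, and for all $n$ large enough the segments between consecutive bounces belong to a finite collection of classes $\mathcal{C}_{i_{j_1}},\dots,\mathcal{C}_{i_{j_m}}$ determined by the interval. Since this family of classes is finite, Lemma \ref{lemma:minimizers_in_a_ball} together with the energy equation yields a uniform lower bound on the distance between $\eta_n\vert_{[-T,T]}$ and $\{c_1,c_2\}$ and a uniform upper bound on $|\dot\eta_n|$ away from $\elld$. Applying the unfolding trick of Theorem \ref{thm:non_periodic_orbits} (reflect across $\elld$ after each bounce to produce a $C^1$ curve $\tilde\eta_n$ with uniformly Lipschitz derivative), Ascoli--Arzel\`a delivers a $C^1$-convergent subsequence on $[-T,T]$, and a diagonal argument over an exhaustion $[-T_k,T_k]\uparrow \mathbb{R}$ yields a subsequence converging in $C^1_{\mathrm{loc}}$ to a curve $\eta$.

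\textbf{Transversality, bootstrap and homotopy classes.} By the construction in Theorem \ref{thm:periodic_two centers}, the velocities $\dot\eta_n$ at each bouncing point are uniformly bounded away from the tangent direction $v$ of $\elld$ (this is the content of the cone condition $|\alpha_k^{x,y}-\pi/2|>\theta_0$ established there). This transversality passes to the $C^1$ limit, so $\eta$ meets $\elld$ transversally at each contact point and $\eta^{-1}(\elld)$ is a discrete set. The standard bootstrap applied on each compact subinterval of $\mathbb{R}\setminus\eta^{-1}(\elld)$ promotes the convergence to the $C^2$ topology, so $\eta$ solves \eqref{eq:motion_equation} between consecutive bounces and satisfies the elastic reflection law. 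Finally, since the $C^0$ limits stay away from the centres, homotopy classes of the segments are preserved, giving $\eta_j\in\mathcal{C}_{i_j}$ for all $j$.

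\textbf{Main obstacle.} The key new difficulty, compared with the perturbative case of Theorem \ref{thm:non_periodic_orbits}, is that one cannot hope for a distance bound from $\{c_1,c_2\}$ that is uniform over all the $\eta_n$ simultaneously: the winding numbers $|i_j|$ are only bounded from below, so segments in $\mathcal{C}_{i_j}$ with very large $|i_j|$ accumulate on the collision segment $[c_1,c_2]$. The resolution, as sketched above, is purely local in time: on each fixed window $[-T,T]$ only finitely many symbols are relevant, so the uniform bounds needed for Ascoli--Arzel\`a and for the bootstrap can be obtained from Lemma \ref{lemma:minimizers_in_a_ball} and from the compactness machinery of Proposition \ref{prop:existence_eteroclinics} restricted to that finite sub-family.
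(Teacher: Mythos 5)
Your proposal is correct and follows essentially the same route as the paper: truncating the symbol sequence, taking the periodic orbits of Theorem \ref{thm:periodic_two centers}, and passing to a $C^1_{\mathrm{loc}}$ limit via the unfolding trick and Ascoli--Arzel\`a, with the key observation that the distance bound from the centres need only be uniform over the finitely many symbols appearing in each fixed time window. You also correctly identify the transversality-at-bounces argument (the cone condition from Theorem \ref{thm:periodic_two centers}) and the main new obstacle relative to Theorem \ref{thm:non_periodic_orbits}, exactly as the paper does.
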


\section{The symbolic dynamic}
\label{sec:symbolic}
In this section, we briefly describe how to build a fairly rich symbolic dynamics using the billiard orbits built in Theorem \ref{thm:non_periodic_orbits} and Theorem \ref{thm:non_periodic_orbits_2centres}. Let $\ell_{[c_1,c_2]}$ be the line passing through the centres and let us consider $\sigma$, a length minimizer joining $c_1$ with a point in $\ell_{[c_1,c_2]}$, sufficiently far off, so that every minimizer intersects it only in its interior. Let us define two surfaces
\begin{align*}
    \Sigma_+ &= \{
    v \in T_{\sigma(s)}\left(\mathbb{R}^2\setminus\{c_1\}\right): \langle v ,\dot{\sigma}(s) \rangle> 0
    \} \\ \Sigma_- &= \{
    v \in T_{\sigma(s)}\left(\mathbb{R}^2\setminus\{c_1\}\right): \langle v ,\dot{\sigma}(s) \rangle< 0
    \}.
\end{align*}
Clearly, $\Sigma_+\cap \Sigma_-= \emptyset$  and $\Sigma = \Sigma_+\cup \Sigma_-$ coincides with the two connected component of open cylinder given by $H^{-1}(h) \cap T \left(\mathbb R^2\setminus\{c_1\}\right)$ determined by the lift of $\sigma$ and its inverse.
Let us denote by $\Phi_t: \Sigma \to \mathbb{R}^2$ the restriction of the billiard \emph{flow}\footnote{Meaning that we follow billiard trajectories, parametrized as piecewise solutions of \eqref{eq:motion_equation}.} to $\Sigma$, at time $t\ge0$. 
For every $v \in \Sigma$, let us define:
\[
T^\pm(v) = \inf \{ \pm t> 0 : \Phi_{t}(v) \in \Sigma \}, \quad U^\pm = \{v \in \Sigma : T^\pm(v)<\infty\}.
\]

Let $U^{(0)} = U^+\cap U^-$ and, for any $n> 0$, let us define $ U^{(n)}= \{ \Phi_{T^+(v)}(v) : v \in U^{(n-1)}\} \cap U^+ $. Similarly set $U^{(-n)} = \{ \Phi_{T^-(v)}(v) : v \in U^{(-n+1)}\} \cap U^-$. Let us define the set of non-escaping points
\[
    \mathcal
    U = \bigcap_{n= -\infty}^\infty U^{(n)}.
\]
For any $v \in \mathcal{U}$ we can define an itinerary, i.e. a sequence $\omega(v) =(\omega_n)_{n \in \mathbb Z} \in \{0,1\}^\mathbb{Z}$ as follows. We set $v_0 = v$ and $\omega_0=0$ if $v_0 \in \Sigma^+$ or $\omega_0 =1$ otherwise. Then set
\[
v_n =\begin{cases}
	\Phi_{T^+(v_{n-1})}(v_{n-1}) &\text{ if }n>0\\
	\Phi_{T^{-}(v_{n+1})}(v_{n+1}) &\text{ if } n<0
\end{cases} \text{ and } \omega_n = \begin{cases}
	0 &\text{ if } v_n \in \Sigma^+\\\
		1 &\text{ if } v_n \in \Sigma^-.
\end{cases}
\]
We thus obtain a map
\[
\omega: \mathcal{U} \to \{0,1\}^\mathbb{Z}.
\]
Thanks to Theorem \ref{thm:non_periodic_orbits} and \ref{thm:non_periodic_orbits_2centres} the set $\mathcal{U}$ is always non-empty. Theorem \ref{thm:non_periodic_orbits} ensures that to any sequence in $\{0,1\}^\mathbb
Z$ corresponds at least one minimizer. On the other hand, the set $\mathcal{U}$ is much larger. For instance, the constant sequence of $0$ can be realized by infinitely many distinct trajectories. It is the same for all minimizers in $\mathcal{C}_k$, $k\ge0$.
Similarly, Theorem \ref{thm:non_periodic_orbits_2centres} gives a correspondence between a set of minimizers and sequences in $\{0,1\}^\mathbb{Z}$ having at least $k_0$ repetitions. 
\begin{prop}
	The map $v \mapsto \omega(v)$ is a semi-conjugation with a sub-shift of finite type.
	\begin{proof}
		We give a brief sketch of the proof. The reader is referred to \cite[Theorem 6.6]{baranzini2024chaotic}, \cite[Theorem 1.7]{BarCanTer2021} or \cite[Section 2.5]{Barutello_2023} and references therein for more details.
		
		The space $\{0,1\}^\mathbb{Z}$ is endowed with the topology of cylinder sets and it is compact. Continuity of the map $\omega:\mathcal U \to \{0,1\}^{\mathbb{Z}}$ is a straightforward consequence of continuous dependence of initial conditions. 
		
		In the case of Theorem \ref{thm:main_perturbative}, the map $\omega$ is surjective and there is nothing prove. In the case of Theorem \ref{thm:main_2_centres} the map is not surjective but the image is a sub-shift of finite type since we are just removing all finite sequences of length $\le k_0$ with different symbols.		
	\end{proof}
\end{prop}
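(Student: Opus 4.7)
The plan is to verify three properties: continuity of $\omega$, the intertwining relation $\omega\circ F = \sigma\circ \omega$ where $F$ denotes the first-return map to $\Sigma$ and $\sigma$ the left shift on $\{0,1\}^\mathbb{Z}$, and the identification of the image $\omega(\mathcal U)$ as a subshift of finite type.

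For continuity I would equip $\{0,1\}^\mathbb{Z}$ with the cylinder (product) topology, under which two sequences are close iff they agree on a long central window $[-N,N]$. Fixing $v\in\mathcal U$ and $N$, the first $N$ forward and backward iterates of $F$ at $v$ meet $\Sigma$ transversally since $\Sigma_\pm$ are defined by strict inequalities $\pm\langle\cdot,\dot\sigma\rangle>0$, and the return times $T^\pm$ are locally finite. Continuous dependence of the billiard flow on initial conditions — smooth away from the centres and from tangencies to $\elld$, and continuous across reflections — then forces every $v'\in\mathcal U$ in a small neighbourhood of $v$ to produce the same $\omega_n$ for $|n|\le N$. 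The intertwining relation $\omega(F(v))_n=\omega(v)_{n+1}$ is immediate from the recursive definition of the itinerary.

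The substantive step is identifying the image. Under Theorem \ref{thm:main_perturbative}, Theorem \ref{thm:non_periodic_orbits} realises every prescribed sequence of winding numbers in $\{k,m\}^{\mathbb Z}$ as a billiard trajectory. Choosing $\sigma$ so that every minimizer in $\mathcal C_k$ and $\mathcal C_m$ crosses it transversally and only in its interior, each loop around the centres contributes exactly one crossing of $\Sigma_+$ or $\Sigma_-$ depending on its orientation; hence $\omega$ surjects onto the full shift $\{0,1\}^{\mathbb Z}$, trivially of finite type. Under Theorem \ref{thm:main_2_centres}, Theorem \ref{thm:non_periodic_orbits_2centres} realises only those sequences with $|i_j|>k_0$ and $\mathrm{sign}(i_j)=-\mathrm{sign}(i_{j+1})$; translating into crossings, the admissible $\omega$ are exactly those in which every maximal monochromatic run has length strictly greater than $k_0$. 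This condition is defined by forbidding the finite list of length-$(k_0+2)$ words in which a $01$- or $10$-transition is flanked by monochromatic blocks of length $\le k_0$, so the image is a subshift of finite type.

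The main delicate point is matching the winding-number coding of Theorems \ref{thm:non_periodic_orbits} and \ref{thm:non_periodic_orbits_2centres} with the $\Sigma_\pm$-crossing coding used to define $\omega$. This reduces to choosing $\sigma$ once and for all transversal to all the minimizers in play (which is possible by taking $\sigma$ to join $c_1$ to a sufficiently far point of $\ell_{[c_1,c_2]}$) and invoking uniqueness of homotopy-class minimizers (Lemma \ref{lemma:uniqueness_length_minimizers}) to guarantee a bijection between consecutive loops and consecutive crossings of $\Sigma$. Once this correspondence is in place, surjectivity in the first case and the combinatorial description of forbidden words in the second case give the desired semi-conjugacy.
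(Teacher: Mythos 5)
Your proposal follows essentially the same route as the paper's (deliberately brief) proof: continuity of $\omega$ via the cylinder topology and continuous dependence on initial data, the intertwining relation read off from the recursive definition of the itinerary, and identification of the image through the realization results of Theorems \ref{thm:non_periodic_orbits} and \ref{thm:non_periodic_orbits_2centres}; your explicit list of forbidden words in the two-centre case merely spells out what the paper leaves implicit. The one point worth flagging is that surjectivity onto the \emph{full} shift in the perturbative case requires choosing the winding classes $k=1$, $m=-1$, since an arc in $\mathcal{C}_k$ contributes $|k|$ consecutive identical symbols rather than one.
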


\appendix
\section{Integrability of the 2 centre problem}
\label{appendix:2_centres}
In this section we collect some features of the $2-$centre problems for the reader convenience. Without loss of generality, we consider the Hamiltonian
\[
  H (p,x) = \frac{1}{2}\vert p \vert^2+\frac{m_1}{\vert x-e_1\vert}+\frac{m_2}{\vert x+e_1\vert},
\]

and assume $m_1\ge m_2>0$. We consider the following change of coordinates
\[
x = \Phi(\xi,\eta) = (\cosh \xi \cos \eta, \sinh \xi \sin \eta)
\]
A straightforward computation shows that
\[
\begin{cases}
	dx_1 = \sinh \xi \cos \eta \, d \xi -\cosh \xi \sin \eta \, d \eta,\\
	dx_2 = \cosh \xi \sin \eta \, d \xi +\sinh \xi \cos \eta \, d \eta.
\end{cases}
\]
and thus the euclidean metric in the new coordinates reads
\begin{align*}
   dx_1^2+dx_2^2& = (\sinh^2 \xi \cos^2 \eta+\cosh^2\xi\sin^2 \eta )(\, d \xi^2 + d\eta^2 )\\&=(\cosh^2\xi-\cos^2 \eta)(\, d \xi^2 + d\eta^2 )
\end{align*}
Moreover the functions $\vert x\pm e_1\vert$ have the following expressions
\begin{align*}
\vert x\pm e_1\vert &= \sqrt{\cosh^2 \xi\cos^2 \eta +1\pm2\cosh \xi\cos \eta+\sinh \xi^2 \sin^2 \eta}\\
&=\sqrt{\sinh^2 \xi+1 +\cos^2 \eta\pm2\cosh \xi\cos \eta } = \cosh \xi \pm \cos \eta
\end{align*}
and thus the $\xi$ coordinate is given by the following relation
\begin{equation}
	\label{eq:function_ellipse}
\cosh \xi  = \vert x+e_1\vert+\vert x-e_1\vert.
\end{equation}
Thus, the Hamiltonian in the new coordinates $\xi$ and $\eta$ reads
\begin{align*}
\tilde H (p_\xi,p_\eta,\xi,\eta) &=  \frac{p_\xi^2+p_\eta^2}{2(\cosh^2 \xi-\cos^2 \eta)}-\frac{m_1}{\cosh \xi+\cos \eta}-\frac{m_2}{\cosh \xi-\cos \eta}\\
& = \frac{\frac{p_\xi^2+p_\eta^2}{2}-m_1(\cosh \xi-\cos \eta)-m_2(\cosh \xi+\cos \eta)}{\cosh^2 \xi-\cos^2 \eta}.
\end{align*}
Thus, the flow of the Hamiltonian $\tilde H$ at energy $h>0$ is a time re-parametrization of the zero energy flow of the Hamiltonian $K$ given by
\[
 K(p_\xi,p_\eta,\xi,\eta) = \frac{p_\xi^2+p_\eta^2}{2}-m_1(\cosh \xi-\cos \eta)-m_2(\cosh \xi+\cos \eta)-h(\cosh^2\xi-\cos^2 \eta).
\] 
Now $K$ splits as a sum of two Hamiltonians $K_1$ and $K_2$ which depend solely on $\xi$ and $\eta$ respectively.
\begin{align*}
K_1(p_\xi,\xi)&= \frac12 p_\xi^2 - (\mu_1 +h \cosh \xi) \cosh \xi,\\
K_2(p_\eta,\eta) &= \frac12 p_\eta^2 + (\mu_2+h \cos \eta) \cos \eta.
\end{align*}
where $\mu_1 = m_1+m_2$ and $\mu_2 = m_1-m_2$. The first Hamiltonian has a unique critical point at the origin (see Figure \ref{fig:phase_portrait1}). A straightforward computation shows that this is a saddle point. For the second Hamiltonian we have to distinguish two cases.
\begin{figure}[h]
	\includegraphics[width=0.31\textwidth]{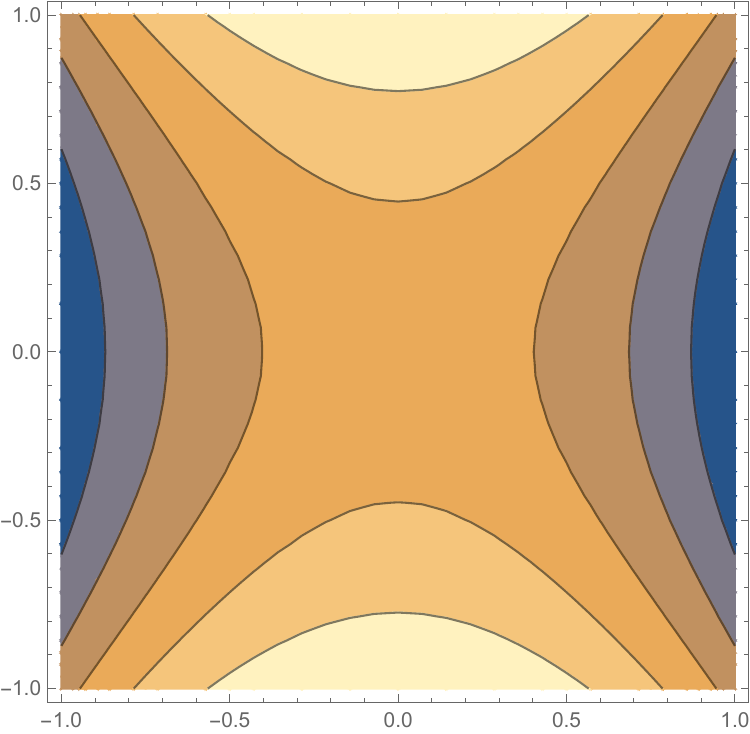}
	\includegraphics[width=0.31\textwidth]{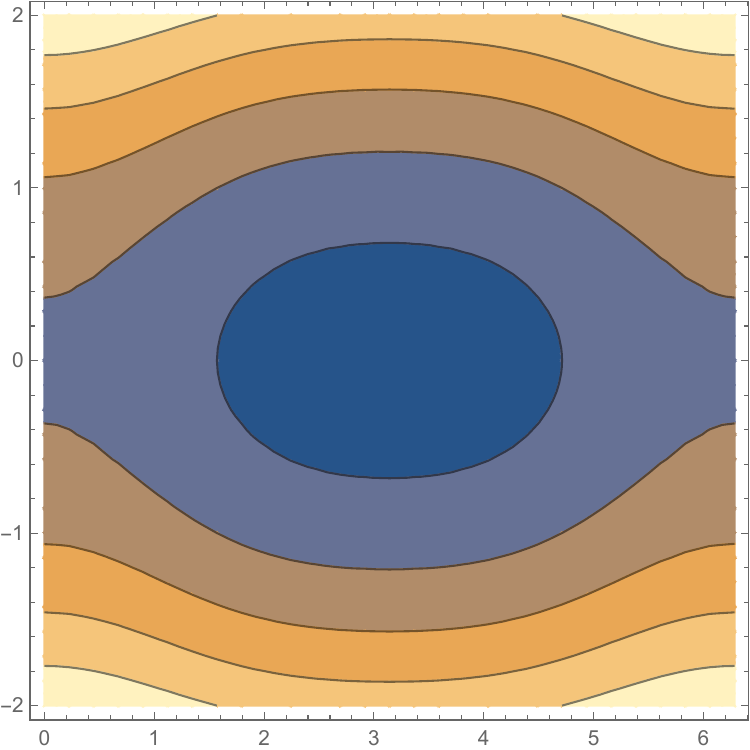}
	\includegraphics[width=0.31\textwidth]{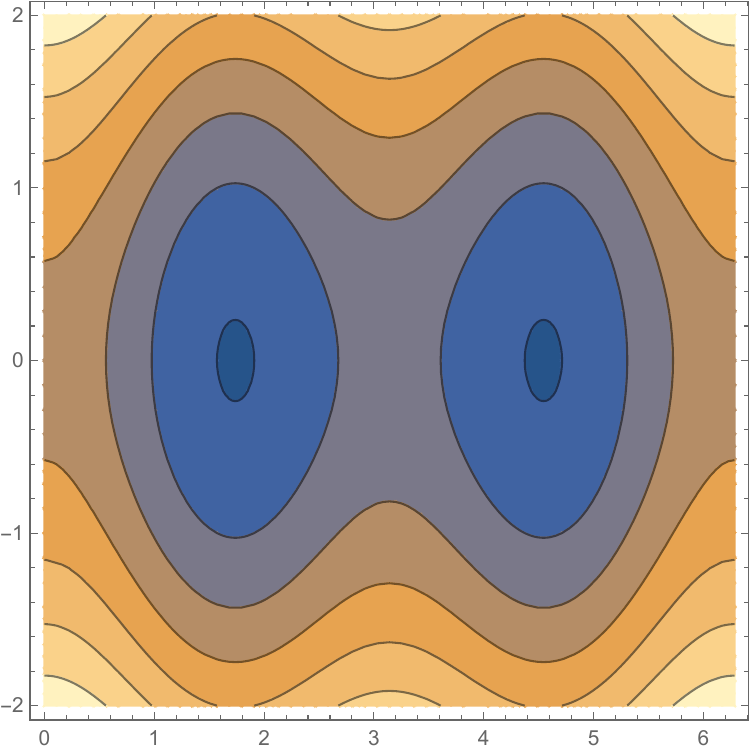}
	\caption{From left to right, the phase portrait of $K_1$, $K_2$ for  $h<\frac{\mu}{2}$ and $K_2$ for $h>\frac{\mu}{2}$.}
	\label{fig:phase_portrait1}
\end{figure}
\begin{itemize}
	\item \textbf{Case $\frac{\mu_2}{2}> h >0$.} In this case there a countably many critical points corresponding to $\eta_k = \pi k$, $k \in \mathbb{Z}$. For this interval of energies they are saddles if $k$ is even and minima if $k$ is odd.
	\item \textbf{Case $h>\frac{\mu_2}{2}$.} The critical points are given by $\eta_k = \pi k$, $k \in \mathbb{Z}$ and $\chi_k^\pm =\pm  \arccos\left(-\frac{\mu_2}{2h}\right)+2\pi k$, $k \in \mathbb{Z}$. They satisfy $\eta_{2k} < \chi_k^+ < \eta_{2k+1} < \chi_{k+1}^-$ for all $k \in \mathbb{Z}$.
	In this case, the $\eta_k$ are always saddles whereas the $\chi_k^\pm$ are always minima. One should note however that $\eta_{2k}$ and $\eta_{2k+1}$ are always at different energy levels, indeed $K_2(0,2k\pi) = \mu_2+h>K_2(0,(2k +1)\pi) = h-\mu_2$.
\end{itemize}
We are interested in the solutions $s^x_{\pm \infty}$ given in Lemma \ref{lemma:uniqueness_spirals}. Since the segment $[-e_1,e_1]$ is the minimum level of the function $f(x) = \vert x-e_1\vert+\vert x+e_1\vert$, which thanks to \eqref{eq:function_ellipse} is determined by the $\xi$ coordinate alone, we have to look for integral curves of $K_1$ having $\xi(t) \to 1$ as $ t \to \pm \infty$. The only possibility is to pick the separatrices, at energy $K_1 = -(\mu_1+h)$. This uniquely sets the energy for $K_2$ to be $\mu_1+h$. A straightforward computation shows that 
\[
   \frac{p_\eta^2}{2} = \mu_1+h-(\mu_2+h \cos \eta) \cos \eta= \mu_1-\mu_2\cos \eta+h(1-\cos^2 \eta) \ge 2 m_2
\]
and thus the $\xi(t)$ and $\eta(t)$ are both monotone. Without loss of generality we consider the case of $\eta(t)$ increasing.
Let us produce an explicit parametrization of the solutions $s_{\pm \infty}^x$ given in Lemma \ref{lemma:uniqueness_spirals}. The usual separation of variables gives the following expression for the solution having initial conditions $(\xi_0,\eta_0)$
\begin{align*}
\sqrt{2 h } t&=\int_{0}^{t}\frac{\dot{\xi}(t) dt}{\sqrt{-\bar{\mu}_1-1+\bar{\mu}_1\cosh \xi(t)+\cosh^2\xi(t)}}\\ &=\int_{\xi_0}^{\xi(t)}\frac{d \xi }{\sqrt{-\bar{\mu}_1-1+\bar{\mu}_1\cosh \xi+\cosh^2\xi}}  ,\\
\sqrt{2 h } t&= \int_{0}^{t}\frac{\dot{\eta}(t) dt}{\sqrt{\bar{\mu}_1+1-\bar{\mu}_2\cosh \eta(t)-\cosh^2\eta(t)}}\\ &= \int_{\eta_0}^{\eta(t)}\frac{d \eta}{\sqrt{\bar{\mu}_1+1-\bar{\mu}_2\cosh \eta-\cosh^2\eta}},
\end{align*}
where $\bar{\mu}_i = \frac{\mu_i}{h}$. The first integral can be reduced to
\[
\sqrt{ 2 h } t = \sqrt{\frac{2}{2+\bar{\mu}_1}}\mathrm{arctanh}\sqrt{\frac{(2+\bar\mu_1)(1+\cosh \xi)}{2(1+\bar\mu_1 +\cosh \xi)}} \Big \vert_{\xi_0}^{\xi(t)}, 
\]
yielding the following formula for $\xi(t,\xi_0)$
\begin{align}
	\label{eq:parametrization_xi}
	\xi(t,\xi_0) &= \mathrm{arcsech} \left(\frac{\bar\mu_1\cosh^2 \sigma(t,\xi_0)+2 }{2+\bar{\mu}_1}\right), \\
	\nonumber
\sigma(t,\xi_0) &= \sqrt{(2+\bar\mu_1)h } t-\mathrm{arctanh}\sqrt{\frac{(2+\bar\mu_1)(1+\cosh \xi_0)}{2(1+\bar\mu_1 +\cosh \xi_0)}}.
\end{align}
 The second integral can be explicitly computed as well, giving
 \begin{align*}
  \sqrt{2 h } t &= \frac{2}{\sqrt{2+\bar{\mu}_1+\sqrt{\Delta}}}F\left(\arctan\frac{\tan\frac{\eta}{2}}{\sqrt{2+\bar \mu_1-\sqrt{\Delta}}},\sqrt{\frac{2\sqrt{\Delta}}{2+\bar\mu_1+\sqrt{\Delta}}}\right)\Big\vert_{\eta_0}^{\eta(t)},\\
  \Delta &= 4+4\bar{\mu}_1+\bar{\mu}_2^2.
 \end{align*}
 Here $F(\varphi,k)$ stands for the Elliptic function of first kind of modulus $k$. This yields the following expression for $\eta(t,\eta_0)$
 \begin{align}
 	\label{eq:parametrization_eta}
 \eta(t) &= 2 \arctan \left(\sqrt{2+\bar\mu_1-\sqrt{\Delta}} \, \mathrm{tn}\left(\omega(t,\eta_0),\sqrt{\frac{2\sqrt{\Delta}}{2+\bar\mu_1+\sqrt{\Delta}}}
 \right)\right),\\
 \nonumber\omega(t,\eta_0) &= \sqrt{\frac{(2+\bar{\mu}_1+\sqrt{\Delta})h}{2}} t-F\left(\arctan\left(\frac{\tan\frac{\eta_0}{2}}{\sqrt{2+\bar{\mu}_1-\sqrt{\Delta}}}\right),\sqrt{\frac{2\sqrt{\Delta}}{2+\bar\mu_1+\sqrt{\Delta}}}\right),
 \end{align}
which should be interpreted in the obvious sense as $t$ varies in $\mathbb{R}$. Here $\mathrm{tn}(u,k)$ stands for the Jacobi elliptic function given by $\mathrm{sn}(u,k)/\mathrm{cn}(u,k)$. See \cite{elliptic_integrals} for more details on elliptic functions and useful identities. 
\bibliography{ref}

\begin{thebibliography}{10}

\bibitem{birkhoff_conj_ADK}
A.~Avila, J.~De~Simoi, and V.~Kaloshin.
\newblock An integrable deformation of an ellipse of small eccentricity is an
  ellipse.
\newblock {\em Annals of Mathematics}, 184(2):527--558, 2016.

\bibitem{baranzini2024chaotic}
S.~Baranzini and G.~M. Canneori.
\newblock Chaotic phenomena for generalised n-centre problems.
\newblock {\em Archive for Rational Mechanics and Analysis}, 248(3):39, 2024.

\bibitem{birkhoff_kepler}
S.~Baranzini, V.~L.~Barutello, I.~De~Blasi, and S.~Terracini.
\newblock On the {B}irkhoff conjecture for {K}epler billiards, 2025.

\bibitem{BarCanTer2021}
V.~Barutello, G.M. Canneori, and S.~Terracini.
\newblock Symbolic dynamics for the anisotropic {$N$}-centre problem at
  negative energies.
\newblock {\em Arch. Ration. Mech. Anal.}, 242(3):1749--1834, 2021.

\bibitem{Barutello_2023}
V.~L. Barutello, I.~De~Blasi, and S.~Terracini.
\newblock Chaotic dynamics in refraction galactic billiards.
\newblock {\em Nonlinearity}, 36(8):4209, jun 2023.

\bibitem{birkhoff_conj_BM}
M.~Bialy and Mironov~A. E.
\newblock The birkhoff-poritsky conjecture for centrally-symmetric billiard
  tables.
\newblock {\em Annals of Mathematics}, 2020.

\bibitem{Bolotin_2017}
S.~V. Bolotin and V.~V. Kozlov.
\newblock Topological approach to the generalized n-centre problem.
\newblock {\em Russian Mathematical Surveys}, 72(3):451, jun 2017.

\bibitem{BolKoz2017}
S.~V. Bolotin and V.~V. Kozlov.
\newblock Topological approach to the generalized {$n$}-centre problem.
\newblock {\em Uspekhi Mat. Nauk}, 72(3(435)):65--96, 2017.

\bibitem{boltzmann1868losung}
L.~Boltzmann.
\newblock L{\"o}sung eines mechanischen problems.
\newblock {\em Wiener Berichte}, 58:1035--1044, 1868.

\bibitem{elliptic_integrals}
P.~F. Byrd and M.~D. Friedman.
\newblock {\em Handbook of Elliptic Integrals for Engineers and Scientists}.
\newblock Grundlehren der mathematischen Wissenschaften. Springer Berlin,
  Heidelberg, 1971.

\bibitem{felder2021poncelet}
G.~Felder.
\newblock Poncelet property and quasi-periodicity of the integrable boltzmann
  system.
\newblock {\em Letters in Mathematical Physics}, 111(1):12, 2021.

\bibitem{gallavotti}
G.~Gallavotti and I.~Jauslin.
\newblock A theorem on ellipses, an integrable system and a theorem of
  boltzmann.
\newblock {\em arXiv preprint arXiv:2008.01955}, 2020.

\bibitem{birkhoff_conj_G}
A.~Glutsyuk.
\newblock On polynomially integrable {B}irkhoff billiards on surfaces of
  constant curvature.
\newblock {\em J. Eur. Math. Soc. (JEMS)}, 23(3):995--1049, 2021.

\bibitem{birkhoff_conj_SK}
V.~Kaloshin and A.~Sorrentino.
\newblock On the local birkhoff conjecture for convex billiards.
\newblock {\em Annals of Mathematics}, 188(1):315--380, 2018.

\bibitem{katok_playground}
A.~B. Katok.
\newblock {\em Billiard table as a playground for a mathematician}, page
  216–242.
\newblock London Mathematical Society Lecture Note Series. Cambridge University
  Press, 2005.

\bibitem{KleKna1992}
M.~Klein and A.~Knauf.
\newblock {\em Classical Planar Scattering by Coulombic Potentials}.
\newblock Springer, Berlin, 1992.

\bibitem{Kna1987}
A.~Knauf.
\newblock Ergodic and topological properties of {C}oulombic periodic
  potentials.
\newblock {\em Comm. Math. Phys.}, 110(1):89--112, 1987.

\bibitem{Kobtsev_elliptic}
I.~F. Kobtsev.
\newblock An elliptic billiard in a potential force field: classification of
  motions, topological analysis.
\newblock {\em Mat. Sb.}, 211(7):93--120, 2020.

\bibitem{kozlov1995}
VV~Kozlov.
\newblock Some integrable extensions of jacobi's problem of geodesics on an
  ellipsoid.
\newblock {\em Journal of Applied Mathematics and Mechanics}, 59(1):1--7, 1995.

\bibitem{lee_book}
J.~M. Lee.
\newblock {\em Introduction to {R}iemannian manifolds}, volume 176 of {\em
  Graduate Texts in Mathematics}.
\newblock Springer, Cham, second edition, 2018.

\bibitem{moser_stable_random}
J.~Moser.
\newblock {\em Stable and Random Motions in Dynamical Systems: With Special
  Emphasis on Celestial Mechanics (AM-77)}.
\newblock Princeton University Press, 1973.

\bibitem{Pustovoitov_ellipses}
S.~E. Pustovoitov.
\newblock Topological analysis of a billiard in elliptic ring in a potential
  field.
\newblock {\em Fundam. Prikl. Mat.}, 22(6):201--225, 2019.

\bibitem{Pustovoitov_conics}
S.~E. Pustovoitov.
\newblock Topological analysis of a billiard bounded by confocal quadrics in a
  potential field.
\newblock {\em Mat. Sb.}, 212(2):81--105, 2021.

\bibitem{SoaTer2012}
N.~Soave and S.~Terracini.
\newblock Symbolic dynamics for the {$N$}-centre problem at negative energies.
\newblock {\em Discrete Contin. Dyn. Syst.}, 32(9):3245--3301, 2012.

\bibitem{TABACHNIKOV_1997}
S.~Tabachnikov.
\newblock Introducing projective billiards.
\newblock {\em Ergodic Theory and Dynamical Systems}, 17(4):957–976, 1997.

\bibitem{Takeuchi_2024}
A.~Takeuchi and L.~Zhao.
\newblock Projective integrable mechanical billiards.
\newblock {\em Nonlinearity}, 37(1):015011, dec 2023.

\bibitem{zhao_takeuchi_mechanical_billiards}
A.~Takeuchi and L.~Zhao.
\newblock Conformal transformations and integrable mechanical billiards.
\newblock {\em Advances in Mathematics}, 436:109411, 2024.

\bibitem{zhao2022projective}
L.~Zhao.
\newblock Projective dynamics and an integrable boltzmann billiard model.
\newblock {\em Communications in Contemporary Mathematics}, 24(10):2150085,
  2022.

\end{thebibliography}
\bibliographystyle{plain}
\end{document}